\documentclass[%
pdflatex,%
sn-mathphys-num%
]{sn-jnl}

\usepackage{amsmath}
\usepackage{amssymb}
\usepackage{amsthm}
\usepackage{bbm}
\usepackage[capitalise]{cleveref}
\usepackage{esdiff}
\usepackage{forest}
\usepackage{mathtools}
\usepackage{mleftright}
\usepackage{siunitx}
\usepackage{xparse}

\newtheorem{theorem}{Theorem}
\newtheorem{remark}{Remark}
\newtheorem{lemma}{Lemma}
\newtheorem{definition}{Definition}
\newtheorem{assumption}{Assumption}
\newtheorem{corollary}{Corollary}
\newtheorem{example}{Example}

\crefname{equation}{}{}
\Crefname{equation}{Equation}{Equations}

\newcommand{\abs}[1]{\left\lvert#1\right\rvert}
\newcommand{\norm}[1]{\left\lVert#1\right\rVert}

\DeclareMathOperator{\diag}{diag}
\DeclareMathOperator{\blkdiag}{blockdiag}

\newcommand{\R}{\mathbb{R}}
\newcommand{\C}{\mathbb{C}}
\newcommand{\N}{\mathbb{N}}
\renewcommand{\Re}{\operatorname{Re}}
\newcommand{\nvar}{N}
\newcommand{\timevar}{t}
\newcommand{\ts}{\timevar_{0}}
\newcommand{\tf}{\timevar_{\mathrm{f}}}
\newcommand{\dt}{h}
\newcommand{\one}{\mathbbm{1}}
\newcommand{\order}[1]{\ensuremath{\mathcal{O}\mleft(#1\mright)}}
\newcommand{\psl}{p_{\text{SL}}}
\newcommand{\so}[1]{\gamma_{#1}}
\newcommand{\qo}[1]{\widehat{\gamma}_{#1}}
\newcommand{\Vs}{V}
\DeclareDocumentCommand{\oc}{s O{\treevar} O{Z} O{t_0}}{\IfBooleanTF{#1}{\Psi}{\psi}_{#2}(#3, #4)}

\newcommand{\tp}[1]{\mathbf{#1}}

\DeclareDocumentCommand{\dY}{O{n} o}{\Delta Y_{#1}\IfValueT{#2}{^{\{#2\}}}}
\DeclareDocumentCommand{\dy}{O{n} o}{\Delta y_{#1}\IfValueT{#2}{^{\{#2\}}}}
\DeclareDocumentCommand{\dg}{O{n} o}{\Delta g_{#1}\IfValueT{#2}{^{\{#2\}}}}
\DeclareDocumentCommand{\G}{O{\ell} O{k}}{\tp{g}^{\, (#1; #2)}}
\DeclareDocumentCommand{\gk}{O{\ell} O{k}}{g^{(#1; #2)}}

\DeclareDocumentCommand{\vy}{O{n} o}{v_{#1}\IfValueT{#2}{^{\{#2\}}}}
\DeclareDocumentCommand{\ac}{O{n} o}{\alpha_{#1}\IfValueT{#2}{^{\{#2\}}}}
\DeclareDocumentCommand{\bc}{O{n} o}{\beta_{#1}\IfValueT{#2}{^{\{#2\}}}}
\newcommand{\comcoef}{\zeta}
\newcommand{\bimap}{W}
\newcommand{\scalvar}{\lambda}


\newcommand{\treevar}{\tau}

\newcommand{\treeroot}{\tau_0}
\newcommand{\treeorder}[1]{\abs{#1}}

\newcommand{\btree}[1]{\Forest{btree #1}}
\forestset{
    btree/.style={
        for tree={
            fill=black,
            circle,
            grow=north,
            minimum width=5pt,
            minimum height=5pt,
            inner sep=1pt,
            outer sep=0pt,
            s sep=3pt,
            l=3pt,
            l sep=3pt,
            if n children=0{}{
                if level=0{baseline}{}
            },
            if n children=1{
                if={mod(level(),2)==0}{
                    grow=60
                }{
                    grow=120
                }
            }{}
        }
    }
}

\newcommand{\revision}[1]{#1}

\raggedbottom

\begin{document}

\title[A Stiff Order Condition Theory for Runge--Kutta Methods Applied to Semilinear ODEs]{A Stiff Order Condition Theory for Runge--Kutta Methods Applied to Semilinear ODEs}

\author*[1]{\fnm{Steven B.} \sur{Roberts}}\email{roberts115@llnl.gov}

\author[2]{\fnm{David} \sur{Shirokoff}}\email{david.g.shirokoff@njit.edu}

\author[3]{\fnm{Abhijit} \sur{Biswas}}\email{abhijit@iitk.ac.in}

\author[4]{\fnm{Benjamin} \sur{Seibold}}\email{seibold@temple.edu}

\affil*[1]{\orgdiv{Center for Applied Scientific Computing}, \orgname{Lawrence Livermore National Laboratory}, \orgaddress{\street{7000 East Ave}, \city{Livermore}, \postcode{94550}, \state{California}, \country{United States}}}

\affil[2]{\orgdiv{Department of Mathematical Sciences}, \orgname{New Jersey Institute of Technology}, \orgaddress{\street{154 Summit Street}, \city{Newark}, \postcode{07102}, \state{New Jersey}, \country{United States}}}

\affil[3]{\orgdiv{Department of Mathematics \& Statistics}, \orgname{Indian Institute of Technology Kanpur}, \orgaddress{\street{Kalyanpur}, \city{Kanpur}, \postcode{208016}, \state{Uttar Pradesh}, \country{India}}}

\affil[4]{\orgdiv{Department of Mathematics}, \orgname{Temple University}, \orgaddress{\street{1805 N. Broad Street}, \city{Philadelphia}, \postcode{19122}, \state{Pennsylvania}, \country{United States}}}

\abstract{%
Classical convergence theory of Runge--Kutta methods assumes that the time step is small relative to the Lipschitz constant of the ordinary differential equation (ODE).
For stiff problems, that assumption is often violated, and a problematic degradation in accuracy, known as order reduction, can arise.
Methods with high stage order, e.g., Gauss--Legendre and Radau, are known to avoid order reduction, but they must be fully implicit.
For the broad class of semilinear ODEs, which consist of a stiff linear term and non-stiff nonlinear term, we show that weaker conditions suffice.
Our new semilinear order conditions are formulated in terms of orthogonality relations and can be enumerated by rooted trees.
Finally, we prove global error bounds that hold uniformly with respect to stiffness of the linear term.}

\keywords{Order reduction, Runge--Kutta method, Ordinary differential equation, Stiffness, Semilinear, Trees}

\pacs[MSC Classification]{65L05, 65L06, 65L20, 65L70, 65M20}

\maketitle

\section{Introduction}
This paper focuses on ordinary differential equations (ODEs) of the semilinear form
\begin{equation} \label{eq:ODE}
    y'(\timevar) = f(y(\timevar)) = J y(\timevar) + g(y(\timevar)),
    \quad
    y(\ts) = y_0,
    \quad
    \timevar \in [\ts, \tf],
\end{equation}
where $y(\timevar) \in \R^{\nvar}$.  The linear term $J y$ may be stiff and involves a constant coefficient matrix $J \in \R^{\nvar \times \nvar}$.  The term $g(y(\timevar))$ is assumed to be non-stiff but can be nonlinear. \revision{Problems} of the form \cref{eq:ODE} frequently arise from method-of-lines semi-discretizations of initial boundary value problems where $J$ is a discrete approximation to the highest order spatial derivative operator.

We consider the Runge--Kutta family of one-step methods for the numerical integration of \cref{eq:ODE}.
For a general right-hand side function $f$, they are given by
\begin{subequations} \label{eq:RK}
    \begin{align}
        \label{eq:RK:stages}
        Y_{n,i} &= y_n + \dt \sum_{j=1}^{s} a_{i,j} f(Y_{n,j}), \qquad i = 1, \dots, s, \\
        \label{eq:RK:step}
        y_{n+1} &= y_n + \dt \sum_{i=1}^{s} b_i f(Y_{n,i}),
    \end{align}
\end{subequations}
where $\dt$ is the time step on the uniform time grid $\timevar_n = \ts + n \dt$, the intermediate stages are denoted by $Y_{n,i}$, and $y_n$ is an approximation to the exact solution $y(\timevar_n)$.
Associated with this method are the $s \times s$ matrix $A = (a_{i,j})_{i,j=1}^s$, the weight vector $b = (b_i)_{i=1}^s$, 
and the abscissa vector $c = (c_i)_{i=1}^s$, herein defined by the standard row simplifying assumption $c = A \one$, where $\one = [1, \dots, 1]^T \in \R^{s}$.

\subsection{Order reduction and related work}
Unfortunately, in the presence of stiffness, Runge--Kutta methods may converge at a rate less than their classical order, denoted throughout as $p$. This order reduction phenomenon, demonstrated by Prothero and Robinson \cite{prothero1974stability}, can be understood via B-convergence theory \cite{frank1981concept}, \cite[p.~201]{dekker1984stability}, \cite[p.~219]{scholz1989order}, which provides global error bounds that hold uniformly with respect to stiffness.
For Runge--Kutta methods applied to nonlinear problems, B-convergence theory relies heavily on the following simplifying assumptions (see \cite[Lemma~2.1]{frank1985order}, \cite[Theorem~2.2]{burrage1987order}, and \cite[Section~IV.15]{hairer1996solving}):
\begin{subequations} \label{eq:simplifying_assumptions}
    \begin{alignat}{4}
        B(q_1):& \quad & b^T c^{k-1} &= \frac{1}{k}, &\quad k &= 1, \dots, q_1, \\
        C(q_2):& \quad & A c^{k-1} &= \frac{c^k}{k}, &\quad k &= 1, \dots, q_2.
    \end{alignat}
\end{subequations}
Here $c^k = [c_1^k, \ldots, c_s^k]^T$.  The minimum of $q_1$ and $q_2$ for which \cref{eq:simplifying_assumptions} holds is known as the \textit{stage order}. While fully implicit Runge--Kutta methods like Gauss--Legendre and Radau can attain high stage order, they have a high computational cost.  B-convergence results are severely limited for more computationally efficient explicit and diagonally implicit methods because their maximum stage order is one and two, respectively.

In one of the first works investigating B-convergence of Runge--Kutta methods specifically for semilinear problems, Burrage, Hundsdorfer, and Verwer showed that stringent nonlinear stability requirements for generic, nonlinear analysis are not needed; instead, weaker linear stability conditions suffice \cite{burrage1986study}. Extensions to semilinear problems in which the linear term is time-dependent were explored in \cite{auzinger1992extension,calvo2000runge}. The B-convergence results in all of these works apply only to high stage order methods. Strehmel and Weiner use a sharper per-stage simplifying assumption in \cite{strehmel1987b} to combat order reduction.
Skvortsov derived order conditions for nonlinear generalizations of the Prothero--Robinson problem \cite{skvortsov2003accuracy,skvortsov2010model}. Similar techniques have been extended to differential-algebraic equations with a focus on the incompressible Navier--Stokes equations \cite{CaiWanKareem2025}.

Outside the class of Runge--Kutta schemes, sharp error analysis for semilinear problems has been explored for exponential integrators \cite{hochbruck2005explicit,luan2013exponential,hochbruck2020convergence}, splitting methods \cite{hansen2016high,einkemmer2015overcoming,einkemmer2016overcoming}, Rosenbrock methods \cite{lubich1995linearly}, and linear multistep methods \cite[Section~V.8]{hairer1996solving}.

\subsection{Novelty and relevance of this work}
In this paper, we introduce a novel B-convergence error analysis that yields sharp order conditions for Runge--Kutta schemes when applied to stiff, semilinear ODEs \cref{eq:ODE}. Notably, our semilinear order conditions are weaker than high stage order and can be satisfied by methods that are not fully implicit. The new order conditions are posed in terms of rational functions of an auxiliary variable, similar to the weak stage order conditions for linear equations (see \cite{ketcheson2020dirk} and \cref{rem:WSO}), however, they are in one-to-one correspondence with rooted trees, analogous to those established by Albrecht \cite{albrecht1987new,albrecht1996runge} for classical, non-stiff order conditions.

In addition to providing the theoretical foundation for novel Runge--Kutta methods that overcome order reduction, the theory developed herein also\revision{:}
\begin{enumerate}
    \item[(i)] rationalizes why the weak stage order conditions (for linear problems) yield B-convergence up to order four for semilinear problems; and
    \item[(ii)] enables the construction of efficient diagonally implicit Runge--Kutta methods that mitigate order reduction for semilinear problems. New schemes will be presented in an upcoming companion paper.
\end{enumerate}
This work on implicit Runge--Kutta methods is a key stepping stone towards semi-implicit approaches like implicit-explicit (ImEx) Runge--Kutta methods. Some stiff order condition results exist for ImEx methods that are weaker than stage order (for instance, \cite{BoscarinoRusso2009, HuShu2025}), but no sharp theory. By means of this pathway, the current work can also be viewed as a parallel to the development of stiff order conditions for exponential Runge--Kutta methods applied to semilinear problems \cite{luan2013exponential}---the success of which led to the construction of high order methods \cite{LuanOstermann2014}. 

This paper is organized as follows.
\Cref{sec:math_foundations} provides the necessary mathematical background and assumptions.
\cref{sec:error_analysis} contains our analysis of the local truncation error and order condition theory.
\Cref{sec:stability_convergence} provides bounds on the global error that are uniform with respect to stiffness.
Our concluding remarks are found in \cref{sec:conclusion}.

\section{Mathematical foundations}
\label{sec:math_foundations}
This section introduces the problem assumptions, notation and background used throughout the paper.

\subsection{Vector space background}\label{subsec:vectorspaceBackground}
Throughout, we write the Runge--Kutta method \cref{eq:RK} applied to \cref{eq:ODE} in compact form as
\begin{subequations} \label{eq:RK_kron}
    \begin{alignat}{3}
        \label{eq:RK_kron:stages}
        Y_n &=& \one \otimes y_{n} &+ (A \otimes Z) Y_n &&+ \dt (A \otimes I) \, \tp{g}(Y_n), \\
        \label{eq:RK_kron:step}
        y_{n+1} &=& y_{n} &+ (b^T \otimes Z) Y_n &&+ \dt (b^T \otimes I) \, \tp{g}(Y_n),
    \end{alignat}
\end{subequations}
where $Z \coloneqq \dt J$ and 
\begin{subequations}
    \begin{alignat*}{2}
        Y_n &\coloneqq \left[
            Y_{n,1}^T, \dots, Y_{n,s}^T
        \right]^T, & \qquad 
        \tp{g}(Y_n) &\coloneqq \left[
            g(Y_{n,1})^T, \dots, g(Y_{n,s})^T
        \right]^T \; \in \R^s \otimes \R^\nvar \cong \R^{s\nvar}.
    \end{alignat*}
\end{subequations}
We use bold functions to \revision{denote} the concatenation of function evaluations for all stages. For the vector space $\R^s$ we use the standard Euclidean inner product; for $\R^\nvar$ we allow any inner product $\langle \cdot, \cdot\rangle$. These then define an inner product on $\R^s \otimes \R^\nvar$ as
\begin{equation}\label{Eq:InnerProd}
    \langle X, U \rangle = \sum_{i = 1}^s \langle x_i, u_i \rangle
    \ \ \textrm{where} \ \
    X = \begin{bmatrix} x_1^T, \ldots, x_s^T \end{bmatrix}^T,\,
    U = \begin{bmatrix} u_1^T, \ldots, u_s^T \end{bmatrix}^T \in \R^s \otimes \R^\nvar,
\end{equation}
with $x_i, u_i \in \R^\nvar$ for $i = 1, \ldots, s$, and associated matrix 2-norms, e.g., $\norm{A}$ or $\norm{A \otimes Z}$. 

For each $x \in \R^\nvar$, the $k$-th derivative of $g$
is a multilinear map of the vectors $u_1, \ldots, u_k \in \R^\nvar$ denoted by
\begin{equation*}
    g^{(k)}(x)(u_1, u_2, \ldots, u_k) \in \R^\nvar \qquad (\textrm{or} \; g', g'' \; \textrm{when} \; k = 1, 2).
\end{equation*}
Evaluating $g^{(k)}$ along the ODE trajectory $y(t)$ yields the family of $k$-linear maps 
\begin{equation*}
    \gk(t)(u_1, u_2, \ldots, u_k) \coloneqq \diff[\ell]{}{t} g^{(k)}( y(t) )(u_1, u_2, \ldots, u_k) \qquad \revision{(\ell  = 0, 1, 2, \dots)}.
\end{equation*}
The definitions also extend to the vector-valued function $\tp{g}$ as follows. If
\begin{equation*}
    X = 
    \begin{bmatrix}
        x_1 \\
        \vdots \\
        x_s
    \end{bmatrix} \in \R^s \otimes \R^\nvar \quad \textrm{and} \quad
    U_j = \begin{bmatrix} 
        u_{j,1} \\
        \vdots \\
        u_{j, s}
    \end{bmatrix} \in \R^s \otimes \R^\nvar \qquad (j =1, \ldots, k)
\end{equation*}
are a set of vectors with block components $x_i, u_{j, i} \in \R^\nvar$ ($i = 1, \ldots, s$), then 
\begin{equation*}
        \tp{g}^{\,(k)}( X ) (U_1, \dots, U_k) \coloneqq 
        \begin{bmatrix}
            g^{(k)}(x_1) (u_{1,1}, \dots, u_{k,1}) \\
            \vdots \\
            g^{(k)}(x_s) (u_{1,s}, \dots, u_{k,s})
        \end{bmatrix} \in \R^s \otimes \R^\nvar.
\end{equation*}
In the special case when $X = \one \otimes y(t)$ (where $y(t)$ is the solution of the ODE), we denote the time derivative of $\tp{g}^{\, (k)}$ by
\begin{equation*}
    \G(t)(U_1, \dots, U_k) \coloneqq 
    \diff[\ell]{}{t} \tp{g}^{\, (k)}\big( \one \otimes y(t)\big) (U_1, \dots, U_k)
    =
    \begin{bmatrix}
            \gk(t)(u_{1,1}, \dots, u_{k,1}) \\
            \vdots \\
            \gk(t)(u_{1,s}, \dots, u_{k,s})
        \end{bmatrix}.        
\end{equation*}

\subsection{Problem assumptions}\label{subsec:assumptions}
Throughout, we make the following assumptions on the ODE. 
\begin{assumption} \label{assump:ODE}
    We assume \cref{eq:ODE} satisfies the following properties:
    \begin{subequations} \label{eq:ODE_assumptions}
        \begin{enumerate}
            \item $J \in \R^{\nvar \times \nvar}$ has nonpositive logarithmic 2-norm: 
            \begin{equation} \label{eq:ODE_assumptions:linear}
                \mu(J)
                \coloneqq \max_{\norm{x} = 1} \langle x, J x \rangle \leq 0.
            \end{equation}
            \item $g$ is Lipschitz continuous, that is, there exists an $L > 0$ such that
            \begin{equation} \label{eq:ODE_assumptions:nonlinear}
                \norm{g(y) - g(z)} \leq L \norm{y-z}
                \quad \forall \,  y, z \in \R^N.
            \end{equation}

            \item All partial derivatives of $g$ up to order $r$ exist and are continuous, and $y$ is $r+1$ times continuously differentiable. Furthermore, there exists a constant $M$ such that
            \begin{equation}
                \label{eq:ODE_assumptions:diff}
                \begin{alignedat}{3}
                    \norm{y^{(k)}(\timevar)} &\leq M
                    &\quad &\forall \, \timevar \in [\ts, \tf],
                    &\quad k &= 1, \dots, r+1, \\
                    \norm{g^{(k)}(y)} &\leq M
                    &\quad &\forall \, y \in \R^N,
                    &\quad k &= 1, \dots, r.
                \end{alignedat}
            \end{equation}
        \end{enumerate}
    \end{subequations}
\end{assumption}

\noindent Several remarks are in order:
\begin{itemize}
    \item As is common (e.g., \cite{burrage1986study, calvo2000runge}), we assume a one-sided Lipschitz condition \cref{eq:ODE_assumptions:linear} which allows the eigenvalues of $J$ to extend arbitrarily far into the left-half plane.
    \item If \cref{eq:ODE_assumptions:nonlinear,eq:ODE_assumptions:diff} are replaced with local bounds on $g$ in a tubular neighborhood of the solution $y(t)$, \cref{thm:MainLTE,thm:convergence} still hold with an additional restriction on $\dt$. 
    \item If $\mu(J) > 0$, then the ODE can be recast to satisfy the problem assumptions with $g \rightarrow g + \mu(J) y$, $J \rightarrow J - \mu(J)I$, $L \rightarrow L + \mu(J)$, etc., as done in \cite[p.~617]{calvo2000runge}.
    \item The techniques in this paper also generalize to a broader class of semilinear problems of the form $y' = J y + g(y) + r(t)$. Versions of \cref{thm:MainLTE,thm:convergence} hold with an error constant $D$ that is not only uniform in $\mu(J) (\leq 0)$ but also in $\norm{r}$.
\end{itemize}

Complementing the regularity and boundedness assumptions are the following stability conditions. A Runge--Kutta scheme is A-stable if $\abs{R(z)} \leq 1$ for all $z \in \C^{-}$ where
\begin{equation*}
    \C^{-} \coloneqq \{z \in \C : \Re(z) \leq 0 \} \qquad
    \text{and} \qquad 
    R(z) \coloneqq 1 + z b^T (I - z A)^{-1} \one.
\end{equation*}
We also use the following less-common notions of linear stability.

\begin{definition}[{\cite[Definition~3.1]{burrage1986study}}]
    The Runge--Kutta method \cref{eq:RK} is ASI-stable if $I - z A$ is non-singular for all $z \in \C^-$ and $(I - z A)^{-1}$ is uniformly bounded for $z \in \C^-$.
\end{definition}

\begin{definition}[{\cite[Definition~3.2]{burrage1986study}}]
    The Runge--Kutta method \cref{eq:RK} is AS-stable if $I - z A$ is non-singular for all $z \in \C^-$ and $z b^T (I - z A)^{-1}$ is uniformly bounded for $z \in \C^-$.
\end{definition}

Runge--Kutta methods with a lower triangular Butcher matrix $A$ are referred to as
diagonally implicit Runge--Kutta (DIRK) methods. If all diagonal entries, $a_{i,i}$, of a DIRK method are positive, it is both AS- and ASI-stable (by \cite[Lemmas~4.3 and 4.4]{burrage1986study}).
Stiffly \revision{accurate} \cite[p.~92]{hairer1996solving} DIRK methods with an explicit first stage and $a_{i,i}$ positive for $i = 2, \dots, s$ are also AS- and ASI-stable.

The AS- and ASI-stability properties have natural matrix-valued extensions.
\begin{lemma} \label{lem:AS_ASI_stability_bounds}
    For an AS- and ASI-stable Runge--Kutta method, the matrix $I - A \otimes Z$ is non-singular and the matrix norms of 
    \begin{equation*}
        (I - A \otimes Z)^{-1}   \qquad \textrm{and} \qquad (b^T \otimes Z) (I - A \otimes Z)^{-1} 
    \end{equation*}
    are uniformly bounded for $Z \in \R^{\nvar \times \nvar}$ such that $\mu(Z) \leq 0$.
\end{lemma}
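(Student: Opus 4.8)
The plan is to reduce both bounds to the corresponding scalar stability objects by exploiting that $A$ and $Z$ act on different tensor factors. Introduce the matrix-valued rational functions $\Phi(z) = (I - zA)^{-1}$ and $\Psi(z) = z\,b^T(I - zA)^{-1}$; ASI- and AS-stability say precisely that $\norm{\Phi(z)}$ and $\norm{\Psi(z)}$ are uniformly bounded on $\C^-$. Since the $s\times s$ matrix $A$ is \emph{fixed}, I would put it in Jordan (or Schur) form, $A = S \hat A S^{-1}$ with $\hat A$ block triangular, so that $I - A\otimes Z = (S\otimes I)\,(I - \hat A\otimes Z)\,(S^{-1}\otimes I)$. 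The operator $I - \hat A\otimes Z$ is then block triangular with diagonal blocks $I - \lambda Z$, one per eigenvalue $\lambda$ of $A$, and the similarity factors contribute only the fixed constant $\kappa_2(S) = \norm{S}\,\norm{S^{-1}}$, which is harmless for a bound that must be uniform in $Z$.

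The key estimate is the dissipativity bound: for $\Re(\zeta) > 0$ and $\mu(Z)\le0$ one has $\Re\langle x,(\zeta I - Z)x\rangle \ge \Re(\zeta)\norm{x}^2$, so $\zeta I - Z$ is invertible with $\norm{(\zeta I - Z)^{-1}} \le 1/\Re(\zeta)$; equivalently $\norm{(I - \lambda Z)^{-1}} \le \abs{\lambda}/\Re(\lambda)$ for $\Re(\lambda) > 0$, uniformly in $Z$. A short argument shows ASI-stability forces every eigenvalue of $A$ into $\{\Re > 0\}\cup\{0\}$, with $0$ semisimple (a defective zero eigenvalue or an eigenvalue on the imaginary axis would render $\Phi$ unbounded or singular on $\C^-$). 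Hence each diagonal block $I - \lambda Z$ is invertible and uniformly controlled --- the $\lambda = 0$ blocks being just $I$, and the $\Re(\lambda) > 0$ blocks handled by the resolvent bound together with $Z(I - \lambda Z)^{-1} = \tfrac1\lambda\big[(I - \lambda Z)^{-1} - I\big]$ for the nilpotent/off-diagonal couplings. This yields nonsingularity of $I - A\otimes Z$ and a uniform bound on $(I - A\otimes Z)^{-1}$.

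For the second operator I would absorb the leading $Z$ using the same identity: $(b^T\otimes Z)(I - A\otimes Z)^{-1}$ reduces, block by block, to expressions in $Z(I - \lambda Z)^{-1}$, which are uniformly bounded when $\Re(\lambda) > 0$. The one dangerous term is the $\lambda = 0$ block, where a bare factor $Z$ survives and is \emph{not} bounded; this is exactly the contribution that AS-stability annihilates (the extra factor $z$ in $\Psi$ would otherwise blow up as $z\to\infty$ in $\C^-$ along the zero-eigenvalue direction), forcing the corresponding component of $b$ in the transformed basis to vanish. The main obstacle is obtaining bounds that are uniform in $Z$ even though $Z$ may be highly non-normal with spectrum arbitrarily far into the left half-plane: diagonalizing $Z$ fails because its eigenvector conditioning is unbounded, and the fix is to never decompose $Z$ at all but to control it through the dissipative resolvent estimate above (a numerical-range argument in the spirit of von Neumann's and Crouzeix's inequalities), decomposing only the fixed matrix $A$.
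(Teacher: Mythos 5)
Your proof is correct, but it takes a genuinely different route from the paper. The paper's proof is very short: non-singularity of $I - A\otimes Z$ follows from the fact that its eigenvalues are $1-\lambda(A)\lambda(Z)$ with $\lambda(Z)\in\C^-$, and the uniform bounds follow in one stroke from Nevanlinna's matrix-valued extension of von Neumann's inequality (\cref{Thm:Nevanlinna}), which gives $\norm{D(Z)}\le\sup_{z\in\C^-}\norm{D(z)}$ for any rational matrix function $D$ and any $Z$ with $\mu(Z)\le 0$; AS- and ASI-stability are precisely the statements that these suprema are finite for $D(z)=(I-zA)^{-1}$ and $D(z)=zb^T(I-zA)^{-1}$. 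You instead re-derive the needed special case by hand: Jordan-decomposing the fixed matrix $A$, controlling $Z$ solely through the dissipative resolvent estimate $\norm{(I-\lambda Z)^{-1}}\le\abs{\lambda}/\Re(\lambda)$, and extracting from AS-/ASI-stability the spectral facts that every nonzero eigenvalue of $A$ lies in the open right half-plane, that $0$ is semisimple, and that the transformed weight vector vanishes on the kernel directions of $A$. All of these steps check out (in particular, your handling of the nilpotent couplings via $Z(I-\lambda Z)^{-1}=\lambda^{-1}[(I-\lambda Z)^{-1}-I]$ and your observation that a bare $Z$ survives only on the $\lambda=0$ block, where AS-stability kills it). What your approach buys is self-containedness and an elementary argument that never decomposes the non-normal $Z$; what it costs is more bookkeeping and weaker constants (a factor $\norm{S}\norm{S^{-1}}$ and eigenvalue-dependent quantities, versus the sharp bound $\sup_{z\in\C^-}\norm{D(z)}$). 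Note also that the paper needs \cref{Thm:Nevanlinna} again later (the scalar version in the proof of \cref{thm:convergence}), so the black-box route comes essentially for free there, whereas your argument would have to be adapted separately.
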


The proof follows from the matrix version of a theorem by von Neumann.
\begin{theorem}[Nevanlinna, {\cite[Corollary 3]{nevanlinna1985matrix}}]\label{Thm:Nevanlinna}
    Suppose $D(z) = (d_{i,j}(z) )$ is an $m\times n$ matrix whose elements $d_{i,j}(z)$ are rational functions of a complex variable $z$. Then 
    \begin{align*}
        \norm{D(Z)} \leq \sup_{z \in \C^{-}} \norm{D(z)}     
    \end{align*}    
    holds for any $Z \in \R^{\nvar \times \nvar}$ such that $\mu(Z) \leq 0$. 
    In $\norm{D(z)}$, the norm is the matrix 2-norm, and in $\norm{D(Z)}$, the norm is induced by the inner product \cref{Eq:InnerProd}.
\end{theorem}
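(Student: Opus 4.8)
The plan is to recognize the statement as a matrix-valued von Neumann inequality and to prove it by transporting the closed left half-plane to the closed unit disk with a Cayley transform, so that the dissipative hypothesis $\mu(Z)\le 0$ becomes a contraction property and the rational substitution becomes the classical functional calculus of a single contraction on the disk. I may assume the right-hand side $\kappa\coloneqq\sup_{z\in\C^{-}}\norm{D(z)}$ is finite, since otherwise the inequality is vacuous. Because each $d_{i,j}$ is rational, finiteness of $\kappa$ forces $D$ to have no poles in $\overline{\C^{-}}$ and to remain bounded as $z\to\infty$ within $\C^{-}$; both facts will be used to ensure that the transformed symbol is analytic on a full neighborhood of the closed disk.

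First I would set up the Cayley transform. Since $\langle x,(I-Z)x\rangle=\norm{x}^2-\langle x,Zx\rangle\ge\norm{x}^2$, the matrix $I-Z$ is nonsingular, and I define $W\coloneqq(I+Z)(I-Z)^{-1}$. Writing $x=(I-Z)y$, a direct expansion gives $\norm{Wx}^2-\norm{x}^2=\norm{(I+Z)y}^2-\norm{(I-Z)y}^2=4\Re\langle y,Zy\rangle\le0$, so $W$ is a contraction, $\norm{W}\le1$. The scalar Möbius map $w\mapsto(w-1)/(w+1)$ carries the closed unit disk onto $\overline{\C^{-}}$, and a short computation shows $(W-I)(W+I)^{-1}=Z$. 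Setting $\tilde D(w)\coloneqq D\big((w-1)/(w+1)\big)$, the homomorphism and composition properties of the rational functional calculus of the single matrix $Z$ (equivalently $W$) then yield $\tilde D(W)=D(Z)$, while $\sup_{\abs{w}\le1}\norm{\tilde D(w)}=\sup_{z\in\overline{\C^{-}}}\norm{D(z)}=\kappa$. The boundedness remarks above guarantee $\tilde D$ has no poles on the closed disk (the point $w=-1$ corresponds to $z=\infty$), hence is analytic on a neighborhood of it.

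It remains to establish the von Neumann inequality $\norm{\tilde D(W)}\le\sup_{\abs{w}\le1}\norm{\tilde D(w)}$ for the single contraction $W$. Complexifying $\R^{\nvar}$ to $\C^{\nvar}$ with the Hermitian extension of $\langle\cdot,\cdot\rangle$, I would invoke the Sz.-Nagy dilation theorem: there is a Hilbert space $\mathcal H\supseteq\C^{\nvar}$ and a unitary $U$ on $\mathcal H$ with $W^k=PU^k\iota$ for all $k\ge0$, where $\iota$ is the inclusion and $P=\iota^{*}$ the orthogonal projection. Expanding the analytic symbol as $\tilde D(w)=\sum_{k\ge0}C_k w^k$ with $C_k\in\C^{s\times s}$ gives $\tilde D(W)=\sum_k C_k\otimes W^k=(I_s\otimes P)\,\tilde D(U)\,(I_s\otimes\iota)$, so $\norm{\tilde D(W)}\le\norm{\tilde D(U)}$ because compression by the contractions $I_s\otimes P$ and $I_s\otimes\iota$ is norm-nonincreasing. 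Finally, since $U$ is unitary its spectral theorem identifies $\tilde D(U)$ with multiplication by $\tilde D(e^{i\theta})$ over the spectrum, whence $\norm{\tilde D(U)}\le\sup_{\abs{w}=1}\norm{\tilde D(w)}\le\kappa$. Chaining the identities yields $\norm{D(Z)}=\norm{\tilde D(W)}\le\kappa$, as claimed.

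I expect the main obstacle to be bookkeeping rather than the dilation itself. Specifically, one must verify the composition identity $\tilde D(W)=D(Z)$ rigorously, tracking that every denominator involved (in particular $I-Z$ and $W+I$) is invertible and that the rational calculus of a single matrix genuinely respects composition with the Möbius map; and one must confirm that finiteness of $\kappa$ precludes poles anywhere on the closed disk, including the preimage $w=-1$ of $z=\infty$, so that the holomorphic functional calculus and the power-series representation used in the dilation step are valid. If a pole were to sit on the unit circle (equivalently on the imaginary axis), a standard limiting argument applying the bound to $rW$ and letting $r\uparrow 1$ would be required, but the finiteness of $\kappa$ already rules this out.
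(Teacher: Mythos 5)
The paper itself contains no proof of this statement: it is imported verbatim from Nevanlinna \cite[Corollary 3]{nevanlinna1985matrix}, and only its consequence (\cref{lem:AS_ASI_stability_bounds}) is proved internally. So there is no in-paper argument to compare against; what you have written is a self-contained proof of the cited result, and it is essentially correct. Your route --- the Cayley transform $W=(I+Z)(I-Z)^{-1}$ converting $\mu(Z)\le 0$ into $\norm{W}\le 1$, followed by Sz.-Nagy unitary dilation and the spectral theorem to obtain the operator-valued von Neumann bound on the disk --- is the standard functional-analytic proof of half-plane contraction bounds of this type, and the points that typically cause trouble are all addressed: invertibility of $I-Z$ from the real numerical-range bound; the complexification step, which is genuinely needed here because the paper allows an arbitrary \emph{real} inner product on $\R^{\nvar}$ (and indeed $\Re\langle u+iv, Z(u+iv)\rangle = \langle u,Zu\rangle+\langle v,Zv\rangle\le 0$, so dissipativity survives the Hermitian extension); the identification of $w=-1$ with $z=\infty$ together with the observation that finiteness of $\kappa$ forces each entry $d_{i,j}$ (bounded by the operator norm) to be pole-free and proper on $\C^{-}$, so that $\tilde D$ is analytic on a neighborhood of the closed disk and the series $\sum_k C_k\otimes W^k$ converges in norm; and the composition identity $\tilde D(W)=D(Z)$, which is legitimate because $W+I=2(I-Z)^{-1}$ is invertible and the spectrum of $W$ (inside the closed disk since $\norm{W}\le 1$) avoids the poles of $\tilde D$. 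One small refinement worth recording: strictly, $\sup_{\abs{w}\le 1}\norm{\tilde D(w)}$ includes the endpoint value $\tilde D(-1)$, but this is a limit of values $\norm{D(z_n)}$ with $z_n\in\C^{-}$, so it does not exceed $\kappa$ and the chain of inequalities closes. In terms of trade-offs, the paper's citation keeps the exposition short and defers to Nevanlinna's original (resolvent-based) treatment, whereas your dilation argument makes the result self-contained at the cost of invoking the Sz.-Nagy dilation theorem, and it has the pedagogical advantage of making explicit exactly where $\mu(Z)\le 0$, the rationality of the entries, and the finiteness of the supremum each enter.
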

\begin{proof}[Proof of {\cref{lem:AS_ASI_stability_bounds}}]
    If $\lambda(A)$ is any eigenvalue of $A$, then by AS- and ASI-stability, 
    \begin{equation*}
        1 - z \lambda(A) \neq 0   \quad \forall z \in \mathbb{C}^-.
    \end{equation*}
    The eigenvalues of $I - A \otimes Z$ take the form $1 - \lambda(A) \lambda(Z)$, where the eigenvalues of $Z$ satisfy $\lambda(Z) \in \C^{-}$ because $\mu(Z) \leq 0$.  Therefore, no eigenvalue of $I - A \otimes Z$ can be zero and the matrix must be non-singular. Setting $D(z)$ to be $(I - z A)^{-1}$ and $z b^T (I - z A)^{-1}$ in \cref{Thm:Nevanlinna} proves the result.
\end{proof}

\subsection{Background on the local truncation error}\label{subsec:BackgroundLTE}
Following \cite{burrage1986study}, an equation for the local truncation error is obtained by observing the exact solution can be viewed as satisfying \cref{eq:RK_kron} with a residual defect $\Delta_0$ and $\delta_0$:
\begin{subequations}\label{eq:RK_stages_exact_sol}
\begin{alignat}{5}     
        \tp{y}(\ts) &=& \one \otimes y(\ts)&\, + \,&\big(A \otimes Z\big)\, \tp{y}(\ts) &\,+\, &\dt \big(A \otimes I\big)\, \tp{g}\big(\tp{y}(\ts)\big)&\, +\, &\Delta_0, &\\
        y(\timevar_{1}) &=& y(\ts)&\, + \, & \big(b^T \otimes Z\big)\, \tp{y}(\ts)&\, + \,&\dt \big(b^T \otimes I\big)\, \tp{g}\big(\tp{y}(\ts)\big) & \,+\, &\delta_0. &    
\end{alignat}    
\end{subequations}
Here,
\begin{equation}\label{Eq:ExactSol}
    \tp{y}(\ts) \coloneqq \big[y(\ts + c_1 \dt)^T, \dots, y(\ts + c_s \dt)^T\big]^T \in \mathbb{R}^s \otimes \mathbb{R}^\nvar
\end{equation}
is the exact solution evaluated via the abscissae. The defects are given by 
\begin{equation}\label{Eq:defects}
    \Delta_0 \coloneqq \sum_{i=1}^{r} \dt^i \so{i} \otimes y^{(i)}(\ts) + \order{\dt^{r + 1}}
    \quad \text{and} \quad
    \delta_0 \coloneqq \sum_{i=1}^{r} \dt^i \qo{i} y^{(i)}(\ts) + \order{\dt^{r + 1}},
\end{equation}
where
\begin{equation*}
    \qo{\ell} \coloneqq \frac{1}{\ell!} - \frac{b^T c^{\ell-1}}{(\ell-1)!} 
    \quad \textrm{and} \quad 
    \so{\ell} \coloneqq \frac{c^{\ell}}{\ell!} - \frac{A c^{\ell-1}}{(\ell-1)!} 
\end{equation*}
are scaled residuals of the $B(\ell)$ and $C(\ell)$ simplifying assumptions \cref{eq:simplifying_assumptions}, respectively. The constants in the $\order{\dt^{r+1}}$ depend only on $y^{(r+1)}$. 
Note that the formulas for the defects \cref{Eq:defects} (in terms of the exact solution) can be obtained by substituting the ODE equation into \cref{eq:RK_stages_exact_sol} and expanding via Taylor series.

Denote the errors between the numerical and exact solutions by
\begin{equation} \label{eq:global_err_def}
    \begin{split}
        \dy &\coloneqq y(\timevar_n) - y_n, \\
        \dY &\coloneqq
        \tp{y}(\timevar_n) - Y_n \\
        &= \left[
            \big(y(\timevar_n + c_1 \dt) - Y_{n,1}\big)^T, \dots, \big(y(\timevar_n + c_s \dt) - Y_{n,s}\big)^T
        \right]^T, \\ 
        \dg &\coloneqq
        \tp{g}(\tp{y}(\timevar_n)) - \tp{g}(Y_n) \\
        &= \left[
           \big(g(y(\timevar_n + c_1 \dt)) - g(Y_{n,1}) \big)^T, \dots, \big(g(y(\timevar_n + c_s \dt)) - g(Y_{n,s}) \big)^T
        \right]^T.
    \end{split}
\end{equation}
Subtracting \cref{eq:RK_kron} from \cref{eq:RK_stages_exact_sol} yields the recursion relation for the errors:
\begin{subequations}\label{eq:error_recur}    
    \begin{alignat}{4}\label{eq:error_recur1}
        \dY[n] &=& \one \otimes \dy[n] &+& \big(A \otimes Z\big) \dY[n] &+& \dt \big(A \otimes I\big) \dg[n] &+ \Delta_0, \\ \label{eq:error_recur2}
        \dy[n+1] &=& \dy[n] &+& \big(b^T \otimes Z\big) \dY[n] &+& \dt \big(b^T \otimes I\big) \dg[n] &+ \delta_0.
    \end{alignat}
\end{subequations}
When \revision{$\dy[n] = 0$, we refer to $\dy[n+1]$ as the local truncation error (LTE) at $t_{n+1}$}.

A tacit assumption up to this point is that the solution $Y_n$ in \cref{eq:RK_kron:stages} exists.
Indeed, the following theorem is significant as it shows the solution exists for a range of $\dt$ values that are independent of the size of $Z$ (i.e., the stiffness).

\begin{theorem}[Calvo, Gonz{\'a}lez-Pinto, Montijano {\cite[Section~4.3]{calvo2000runge}}] \label{thm:exist_unique}
    Consider an ASI-stable Runge--Kutta method \cref{eq:RK} used to solve the ODE \cref{eq:ODE} under \cref{assump:ODE}.
    There exists a positive constant $\widetilde{\dt}$, dependent only on $L$ and the method coefficients, such that for all $\dt \in [0, \widetilde{\dt})$ there exists a unique solution to the stage equations \cref{eq:RK:stages}.
\end{theorem}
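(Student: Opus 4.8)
The plan is to recast the stage equations \cref{eq:RK_kron:stages} as a fixed-point problem and invoke the Banach fixed-point theorem, with the uniform bound from \cref{lem:AS_ASI_stability_bounds} guaranteeing that the contraction radius is independent of the stiffness. First I would rearrange \cref{eq:RK_kron:stages} into
\[
    (I - A \otimes Z)\, Y_n = \one \otimes y_n + \dt\,(A \otimes I)\, \tp{g}(Y_n).
\]
Since $Z = \dt J$ with $\dt \geq 0$ gives $\mu(Z) = \dt\,\mu(J) \leq 0$, \cref{lem:AS_ASI_stability_bounds} applies: the matrix $I - A \otimes Z$ is invertible and $\norm{(I - A \otimes Z)^{-1}} \leq K$ for a constant $K$ depending only on the method coefficients, uniformly over all admissible $Z$. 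Solving the stage equations is therefore equivalent to finding a fixed point of
\[
    T(Y) \coloneqq (I - A \otimes Z)^{-1}\bigl[\,\one \otimes y_n + \dt\,(A \otimes I)\, \tp{g}(Y)\,\bigr].
\]

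Next I would verify that the Lipschitz bound \cref{eq:ODE_assumptions:nonlinear} lifts to the block function $\tp{g}$. From the block definition of $\tp{g}$ and the inner product \cref{Eq:InnerProd}, one has $\norm{\tp{g}(Y) - \tp{g}(\widetilde{Y})}^2 = \sum_{i} \norm{g(Y_{i}) - g(\widetilde{Y}_{i})}^2 \leq L^2 \sum_{i} \norm{Y_{i} - \widetilde{Y}_{i}}^2$, so $\tp{g}$ is Lipschitz with the same constant $L$ on $\R^s \otimes \R^\nvar$. Then, using $\norm{A \otimes I} = \norm{A}$, I would estimate
\[
    \norm{T(Y) - T(\widetilde{Y})} = \dt\,\bigl\lVert (I - A \otimes Z)^{-1}(A \otimes I)\bigl(\tp{g}(Y) - \tp{g}(\widetilde{Y})\bigr)\bigr\rVert \leq \dt\, K \norm{A}\, L\, \norm{Y - \widetilde{Y}}.
\]
Choosing $\widetilde{\dt} \coloneqq (K \norm{A} L)^{-1}$ renders $T$ a strict contraction for every $\dt \in [0, \widetilde{\dt})$, and the Banach fixed-point theorem then delivers a unique fixed point, i.e., a unique solution $Y_n$ of the stage equations.

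The main obstacle—and really the entire content—is the uniform bound $K$ on $(I - A \otimes Z)^{-1}$ that holds independently of $Z$; this is precisely what makes $\widetilde{\dt}$ depend only on $L$ and the method coefficients rather than on the stiffness of $J$. That uniformity is already supplied by ASI-stability through \cref{lem:AS_ASI_stability_bounds} (which rests on \cref{Thm:Nevanlinna}), so the remaining work is just the routine contraction estimate above. It is worth emphasizing that only the non-stiff nonlinear term $\tp{g}$ is subject to the step-size restriction, while the stiff linear part is absorbed exactly into the bounded inverse $(I - A \otimes Z)^{-1}$; this separation is what decouples $\widetilde{\dt}$ from the size of $Z$.
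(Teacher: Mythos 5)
Your proposal is correct and is essentially the same argument the paper relies on: the paper does not reprove this result but defers to the fixed-point iteration of Calvo, Gonz\'alez-Pinto, and Montijano, which is precisely your contraction-mapping setup with the stiffness-uniform bound on $(I - A \otimes Z)^{-1}$ supplied by ASI-stability. The only cosmetic remark is that you invoke \cref{lem:AS_ASI_stability_bounds}, which is stated for AS- \emph{and} ASI-stable methods, whereas the part you actually use (boundedness of $(I - A \otimes Z)^{-1}$) follows from ASI-stability alone, matching the hypothesis of the theorem.
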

Theorem \cite[Section~4.3]{calvo2000runge} actually proves a stronger result allowing for a time-dependent $J$.  
Here, we view $Y_n$ and $y_n$ (and by extension $\dY[n]$) as a function of two independent variables $h$ and $Z$. When the fixed point iteration in \cite[Section~4.3]{calvo2000runge} for $Y_n$ is combined with the standard contraction mapping proof of the implicit/inverse function theorem (see \cite[Chapter~1.3]{TaylorPDE1}), \cref{assump:ODE} implies $Y_n$ and $y_n$ are $r$ times continuously differentiable function of $h$ on $[0, \widetilde{\dt})$.

\section{Local truncation error analysis} \label{sec:error_analysis}
This section develops the main theoretical result of the paper: the derivation of the semilinear order conditions.
We show they are in one-to-one correspondence with rooted trees, and this graphical interpretation helps to identify redundant conditions.

\subsection{Power series expansions of the local truncation error}\label{subsec:LTERecursion}
The classical (non-stiff) Runge--Kutta order conditions are most often derived through a B-series expansion for the LTE. An alternative approach, originally proposed by Albrecht \cite{albrecht1987new,albrecht1996runge} for Runge--Kutta methods, introduces recursively-generated orthogonality conditions for the LTE.  

In this subsection we expand Albrecht's approach to the semilinear setting. Notably, the power series expansions for the LTE depends on $\tp{g}$, $y$, and expressions that are uniformly bounded in $Z$ (i.e., the stiffness).

\begin{lemma}[Recursive formula for LTE]\label{thm:LTE}
    Suppose the Runge--Kutta scheme \cref{eq:RK} is AS- and ASI-stable, \cref{assump:ODE} holds, and the time step satisfies $\dt < \widetilde{\dt}$ where $\widetilde{\dt}$ is given by \cref{thm:exist_unique}.  When applied to \cref{eq:ODE}, the errors \cref{eq:global_err_def} for the first step admit the series
    \begin{equation} \label{eq:LTE_series}
        \begin{split}
            \dY[0] &= \sum_{i=1}^{r} \dY[0][i] \dt^i + \varepsilon_{Y}(\dt),\\
            \dy[1] &= \sum_{i=1}^{r} \dy[1][i] \dt^i + \varepsilon_{y}(\dt),\\
            \dg[0] &= \sum_{i=\revision{0}}^{r-1} \dg[0][i] \dt^i + \varepsilon_{g}(\dt),
        \end{split}
    \end{equation}
    where $\norm{\varepsilon_{Y}},\norm{\varepsilon_{y}} \leq D \dt^{r+1}$ and $\norm{\varepsilon_{g}} \leq D \dt^{r}$ hold with a constant $D$ depending only on $M$, $L$ and method coefficients (but not $Z$). The coefficients are defined recursively by 
    \begin{subequations} \label{eq:LTE_coeffs}
        \begin{align}\label{eq:LTE_coeffs:Y}
            \dY[0][i] &= \big(I - A \otimes Z\big)^{-1} \left( \so{i} \otimes y^{(i)}(\ts) \right) + \big(A \otimes I\big) \big(I - A \otimes Z\big)^{-1} \dg[0][i-1], \\ \label{eq:LTE_coeffs:y}
            \begin{split}
            \dy[1][i] &= \big(b^T \otimes Z\big) \big(I - A \otimes Z\big)^{-1} \left( \so{i} \otimes y^{(i)}(\ts) \right) \\
            & \quad + \big(b^T \otimes I\big) \big(I - A \otimes Z\big)^{-1} \dg[0][i-1] 
                +\qo{i} y^{(i)}(\ts) ,
            \end{split} \\
                \label{eq:LTE_coeffs:g}
            \dg[0][i] &= \sum_{\ell = 0}^{i - 1} \ \sum_{k=1}^{i - \ell} \
            \sum_{\substack{ m_1 + \cdots + \\ m_k = i - \ell } }
            \frac{(-1)^{k+1}}{k! \ell!} (C^{\ell} \otimes I) \G(\ts) \mleft( \dY[0][m_1], \dots, \dY[0][m_{k}] \mright).
        \end{align}
    \end{subequations}
    Here $C = \diag(c)$, while the subscript $m_1 + \ldots + m_k = i -\ell$ denotes a summation over all positive $k$-tuples $(m_1, \ldots, m_k) \in \mathbb{Z}_+^{k}$ whose sum is $i - \ell$.
\end{lemma}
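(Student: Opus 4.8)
The plan is to extract the three expansions in \cref{eq:LTE_series} from the first-step error recursion \cref{eq:error_recur} specialized to $\dy[0] = 0$, viewing $\dY[0]$, $\dy[1]$, and $\dg[0]$ as functions of the two independent variables $\dt$ and $Z$. By the differentiability statement following \cref{thm:exist_unique}, these quantities are smooth enough in $\dt$ near $\dt = 0$ to admit finite Taylor expansions; the substance of the lemma is to identify the Taylor coefficients with the recursion \cref{eq:LTE_coeffs} and to show that both the coefficients and the remainders are bounded uniformly in $Z$. First I would invert the stage equation \cref{eq:error_recur1} using the nonsingularity from \cref{lem:AS_ASI_stability_bounds}, obtaining
\begin{equation*}
    \dY[0] = \big(I - A \otimes Z\big)^{-1} \Delta_0 + \dt \, (A \otimes I)\big(I - A \otimes Z\big)^{-1} \dg[0],
\end{equation*}
where I use that $A \otimes I$ commutes with $\big(I - A \otimes Z\big)^{-1}$.

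I would then substitute this into the step equation \cref{eq:error_recur2}. The algebraic step that produces the clean $(b^T \otimes I)$ weight in \cref{eq:LTE_coeffs:y} is the identity
\begin{equation*}
    \big(b^T \otimes Z\big)\big(I - A \otimes Z\big)^{-1}(A \otimes I) + \big(b^T \otimes I\big) = \big(b^T \otimes I\big)\big(I - A \otimes Z\big)^{-1},
\end{equation*}
verified by right-multiplying by $I - A \otimes Z$ and invoking the same commutativity. It remains to expand the three ingredients in $\dt$. The defects $\Delta_0$ and $\delta_0$ are already expanded in \cref{Eq:defects}. For the nonlinear term I would write $\dg[0] = \tp{g}(\tp{y}(\ts)) - \tp{g}(\tp{y}(\ts) - \dY[0])$ and expand in three nested stages: a Taylor expansion of $\tp{g}$ about $\tp{y}(\ts)$ yields the prefactor $\tfrac{(-1)^{k+1}}{k!}\,\tp{g}^{(k)}(\tp{y}(\ts))$; because the $i$-th block of $\tp{y}(\ts)$ equals $y(\ts + c_i \dt)$, a Taylor expansion of $\tp{g}^{(k)}(\tp{y}(\ts))$ in $\dt$ produces $\sum_\ell \tfrac{\dt^\ell}{\ell!}(C^\ell \otimes I)\,\G[\ell][k](\ts)$; and finally inserting the series for $\dY[0]$ and using $k$-linearity generates the inner sum over compositions $m_1 + \cdots + m_k$.

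Matching the coefficient of $\dt^i$ then yields \cref{eq:LTE_coeffs}: the constraint $m_j \ge 1$ forces $k \le i - \ell$ and $\ell \le i-1$, which is precisely the index range in \cref{eq:LTE_coeffs:g}, and reading off the $\dt^i$ coefficients of $\dY[0]$ and $\dy[1]$ gives \cref{eq:LTE_coeffs:Y,eq:LTE_coeffs:y}. Since $\dg[0][i-1]$ involves only $\dY[0][m]$ with $m \le i-1$, the recursion closes and determines all coefficients in turn. The step I expect to be the main obstacle is the remainder estimate together with its uniformity in $Z$. Here I would proceed by induction on $i$: \cref{lem:AS_ASI_stability_bounds} bounds $\big(I - A \otimes Z\big)^{-1}$ and $\big(b^T \otimes Z\big)\big(I - A \otimes Z\big)^{-1}$ independently of $Z$, and combined with the bounds $M$ on $y^{(k)}$ and $g^{(k)}$ from \cref{assump:ODE} this shows that each $\dY[0][i]$ and $\dg[0][i]$ is bounded, uniformly in $Z$, by a constant depending only on $M$, $L$, and the method coefficients. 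The delicate part is $\varepsilon_Y$, $\varepsilon_y$, $\varepsilon_g$, since the nonlinear remainder is a composition of three truncated expansions: I would collect the discarded terms, control the Taylor remainder of $\tp{g}$ through $L$ and $M$ and the defect tails through $y^{(r+1)}$, and propagate these bounds through $\big(I - A \otimes Z\big)^{-1}$, so that the resulting $D$ is $Z$-independent and the remainders are $\order{\dt^{r+1}}$ for $\dY[0]$, $\dy[1]$ and $\order{\dt^{r}}$ for $\dg[0]$ (the extra power for the first two coming from the $\dt$ that premultiplies $\dg[0]$).
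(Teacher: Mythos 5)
Your proposal is correct and follows essentially the same route as the paper: invert the stage error equation using \cref{lem:AS_ASI_stability_bounds} (your commutation identity for the $(b^T\otimes I)(I-A\otimes Z)^{-1}$ weight is exactly the manipulation behind the paper's \cref{eq:LTE}), then expand $\Delta_0$, $\delta_0$, and $\dg[0]$ via the same three nested Taylor expansions and match powers of $\dt$. The only cosmetic difference is in the remainder: the paper bounds $\varepsilon_Y$ by differentiating \cref{eq:LTEa} $r$ times and applying the integral form of Taylor's theorem, whereas you collect the discarded tail terms directly; both yield the same $Z$-uniform $\order{\dt^{r+1}}$ estimate.
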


\begin{remark}
    Note that \revision{$\dg[0][0] = 0$, and} terms in \cref{eq:LTE_coeffs:g} for which one of the $m_j = 1$ vanish since $c = A \one$ implies $\dY[0][1] = 0$.
\end{remark}

\begin{remark} \label{rem:classical_special_case}
    By setting $Z=0$ in \cref{eq:LTE_coeffs}, we recover the classical, non-stiff error expansion of Albrecht in \cite[Recursion~0]{albrecht1996runge}.
    Without loss of generality, Albrecht was able to derive order conditions looking at scalar ODEs.  However, for our semilinear analysis, scalar problems allow terms in \cref{eq:LTE_coeffs} to commute, for example $I \otimes g'(y(t))$ and $(I - A \otimes Z)^{-1}$, and leads to an incomplete set of order conditions.
    This discrepancy starts at order three terms and is why we require Kronecker products.
\end{remark}

\begin{proof}
    Since the initial condition is exact, $\dy[0] = 0$, \cref{eq:error_recur} can be manipulated into 
    \begin{subequations} \label{eq:LTE}
        \begin{align}\label{eq:LTEa}
            \dY[0] &= \dt \big(A \otimes I\big) \big(I - A \otimes Z\big)^{-1} \dg[0] + \big(I - A \otimes Z\big)^{-1} \Delta_0, \\ \label{eq:LTEb}
            \dy[1] &= \dt \big(b^T \otimes I\big) \big(I - A \otimes Z\big)^{-1} \dg[0] + \big(b^T \otimes Z\big) \big(I - A \otimes Z\big)^{-1} \Delta_0 + \delta_0.
        \end{align}
    \end{subequations}
    Existence and boundedness of the terms $(I - A \otimes Z)^{-1}$ and $(b^T \otimes Z) (I - A \otimes Z)^{-1}$ follow from \cref{lem:AS_ASI_stability_bounds}.  

    Since $\dY[0] = \dY[0](\dt)$ is $r$ times continuously differentiable (see \cref{subsec:BackgroundLTE}), so is $\dy[1]$; both can be expanded in a power series of the form \cref{eq:LTE_coeffs:Y}.  To estimate $\varepsilon_{Y}$, which is the $r$-th Taylor remainder, take the $r$-th derivative of \cref{eq:LTEa}, which shows
    \begin{equation}\label{Eq:varForm}
        \dY[0]^{(r)}(\dt) = \dt f_1(\dt) + f_2(\dt),
    \end{equation}    
    where $f_1$ is continuous and $f_2$ continuously differentiable. Since the matrix inverses in \cref{eq:LTE} are bounded, the norms of $\|f_1\|$, $\|f_2\|$, $\|f_2'\|$ are bounded by constants depending only on $L$, $M$, and the method coefficients (e.g., by induction on the derivatives of $\dY[0]^{(j)}$ via \cref{eq:LTEa}). Hence, $\|\dY[0]^{(r)}(h) - \dY[0]^{(r)}(0)\| \leq D' h$ for a constant $D'$ that depends only on $L$, $M$ and the method coefficients. The integral version of the Taylor remainder theorem for vector-valued functions yields
    \begin{align*}
        \| \varepsilon_{Y} \|
        &= \frac{h^{r}}{r!} \norm{ \int_0^1 \dY[0]^{(r)}\big( (1 - u^{1/r})h \big) - \dY[0]^{(r)}(0) \, \textrm{d}u \, } \\
        &\leq \frac{h^{r}}{r!} \int_0^1 \norm{ \dY[0]^{(r)}\big( (1 - u^{1/r})h \big) - \dY[0]^{(r)}(0) } \, \textrm{d}u \\
        &\leq D h^{r+1}\,, \qquad \textrm{where} \quad D = D'/r!\;, 
    \end{align*}
    where $1-u^{1/r}$ arises from a change of variables in the standard expression.
    A similar estimate holds for $\varepsilon_{y}$ and $\varepsilon_g$.

    The recurrence formulas \cref{eq:LTE_coeffs:Y} and \cref{eq:LTE_coeffs:y} follow from substituting \cref{eq:LTE_series} into \cref{eq:LTE} and matching powers of $\dt$. To obtain the expression for $\dg[0][i]$, first expand $\dg[0]$ via Taylor series as
    \begin{equation}\label{Eq:Taylor_g}
        \dg[0] =
        \tp{g}\big(\tp{y}(\ts)\big) - \tp{g}\big(\tp{y}(\ts) - \dY[0]\big) \\
        = \sum_{k=1}^{r-1} \frac{(-1)^{k+1}}{k!} \tp{g}^{\, (k)}\big(\tp{y}(\ts)\big) (\underbrace{\dY[0], \dots, \dY[0]}_{k \text{ times}}) + \order{\dt^{r}} .
    \end{equation}
    Here the constant in $\order{\cdot}$ depends on derivatives of $g$ and is bounded independent of $Z$.
    Next, Taylor-expand each $\tp{g}^{\, (k)}$ in powers of $\dt$ to obtain (suppressing the arguments of the multilinear map)
    \begin{equation}\label{eq:PowerSeries_gk}
        \tp{g}^{\, (k)}\big(\tp{y}(\ts)\big) = 
        \sum_{\ell = 0}^{r-1} \frac{\dt^{\ell}}{\ell!} \diff[\ell]{}{\dt} \tp{g}^{\, (k)}\big(\tp{y}(\ts ) \big)\Big|_{\dt = 0} + \order{ \dt^{r} }. 
    \end{equation}
    From the definition of $\tp{y}(t_0)$ in \cref{Eq:ExactSol}, each term in the series in \cref{eq:PowerSeries_gk} has the form
    \begin{equation}\label{Eq:SubTermPowerSeries_gk} 
        \diff[\ell]{}{\dt} \tp{g}^{\, (k)}\big(\tp{y}(\ts ) \big)\Big|_{\dt = 0}
        =  (C^{\ell} \otimes I) \, \G(\ts) .
    \end{equation}
    Combining \cref{Eq:Taylor_g}, \cref{eq:PowerSeries_gk}, and \cref{Eq:SubTermPowerSeries_gk} yields
    \begin{equation}\label{Eq:dgFull}
            \dg[0] = \sum_{k=1}^{r-1} \sum_{\ell=0}^{r-1-k} \frac{(-1)^{k+1} \dt^{\ell}}{k! \ell!} (C^{\ell} \otimes I) \, \G(\ts) (\underbrace{\dY[0], \dots, \dY[0]}_{k \text{ times}}) + \order{\dt^{r}}.
    \end{equation}
    Substituting the series \cref{eq:LTE_coeffs:Y} for $\dY[0]$ into $ \G(\ts)$ yields a $k$-fold sum over terms of the form
    \begin{equation} \label{eq:multi_ind_g}
        \begin{split}
            & \quad \G(\ts)(\dY[0], \dots, \dY[0]) \\
            &= \sum_{m_1 + \ldots + m_k < r} \G(\ts)\mleft(\dY[0][m_1], \dots, \dY[0][m_k]\mright) \, \dt^{m_1 + \dots + m_k} + \order{\dt^r}.
        \end{split}
    \end{equation}
    Finally, substituting \cref{eq:multi_ind_g} into \cref{Eq:dgFull} yields \cref{eq:LTE_coeffs:g} for $\dg[0][i]$ as the coefficient of $\dt^{i}$. Note that every term in \cref{eq:multi_ind_g} satisfying $m_1 + \dots + m_k = i - \ell$ appears in $\dg[0][i]$. 
\end{proof}
\cref{thm:LTE} yields a systematic, albeit tedious, algorithm to derive the LTE up to a desired order. The series coefficients of $\dy[1]$ through order three, for example, are
\begin{equation} \label{eq:LTE_order_3}
    \begin{split}
        \dy[1] &= \dt \, \qo{1} \, y'(\ts)
        + \dt^2 \,\qo{2} \, y''(\ts)
        + \dt^2 \big(b^T \otimes Z\big) \big(I - A \otimes Z\big)^{-1} \big( \so{2} \otimes y''(\ts) \big) \\
        & \quad + \dt^3 \, \qo{3} \, y^{(3)}(\ts)
        + \dt^3 \big(b^T \otimes Z\big) \big(I - A \otimes Z\big)^{-1} \big( \so{3} \otimes y^{(3)}(\ts) \big) \\
        & \quad + \dt^3 \big(b^T \otimes I\big) \big(I - A \otimes Z\big)^{-1} \big(I \otimes g'(y_0)\big)\big(I - A \otimes Z\big)^{-1} \big( \so{2} \otimes y''(\ts) \big) + \order{\dt^4}.
    \end{split}
\end{equation}
It is important to note that the differentials appearing in \cref{eq:LTE_order_3}, and more generally in \cref{eq:LTE_coeffs}, are of $y$ and $g$.  These are bounded under \cref{assump:ODE}.
The unbounded $Z$ is judiciously confined to expressions that are bounded by AS- and ASI-stability.

\subsection{Tree representation for the local truncation error}
\label{subsec:treerep}
Our next goal is to expand $\dy[1][i]$ as linearly independent combinations of differentials involving $g$ and $y$.
The expansions provide a systematic pathway to compute \emph{semilinear order conditions}, which are presented in \cref{subsec:Semilinear_OC} up to fifth order.

Our solution expansion for $\dy[1][i]$ follows the spirit of Albrecht for classical (non-stiff) order conditions \cite[Section~4]{albrecht1996runge}.
Albrecht's recursion leads to an expansion in a different set of differentials and corresponding weights than Butcher's rooted-tree-based B-series approach.

We use the sets
\begin{alignat*}{2}
    T &\coloneqq \{
        \btree{[]},
        \btree{[[]]},
        \btree{[[][]]},
        \btree{[[[]]]},
        \btree{[[][][]]},
        \btree{[[[]][]]},
        \dots
    \},
    & \qquad & \text{(rooted trees)} \\
    T_i &\coloneqq \{ \treevar \in T : \treeorder{\treevar} = i \},
    & \qquad & \text{(rooted trees with $i$ vertices)}
\end{alignat*}
where $\treeorder{\treevar}$ denotes the number of vertices in a tree.
The tree with one vertex is denoted by $\treeroot \coloneqq \btree{[]}$.
The \textit{standardized form} \cite[Section~4.1]{albrecht1996runge} of a tree is
\begin{equation*}
    \treevar = [\treeroot^{\ell} \, \treevar_1 \dots \treevar_k],
    \qquad
    \ell \geq 0, \quad k \geq 0,
    \quad
    \treevar_i \neq \treeroot \quad (i = 1,\ldots, k),
\end{equation*}
where the brackets indicate joining the subtree arguments to a shared root.
The exponent in $\treeroot^{\ell}$ is the number of terminal nodes that are children of the root node.
In standardized form, the term $\treeroot^{\ell}$ always appears first in the list of subtrees provided $\ell>0$.
For example,
\begin{equation*}
    \btree{[[][[]][][[[][][]]]]}
    = [\treeroot^2 \, \btree{[[[][][]]]} \, \btree{[[]]}]
    = [\treeroot^2 \, [\btree{[[][][]]}] \, [\treeroot]]
    = [\treeroot^2 \, [[\treeroot^3]] \, [\treeroot]].
\end{equation*}

The proof of the next theorem is deferred to \cref{app:abstractrec}.
\begin{theorem} \label{thm:LTE_trees}
    The LTE \cref{eq:LTE_series} can be expressed as
    \begin{subequations} \label{eq:LTE_tree_series}
        \begin{align}
            \label{eq:LTE_tree_series:Y}
            \dY[0] &= \sum_{\substack{\treevar \in T \\ \abs{\treevar} \leq r}} \zeta_{\treevar} \oc* \dt^{\abs{\treevar}} + \order{\dt^{r + 1}}, \\
            \label{eq:LTE_tree_series:y}
            \dy[1] &= \sum_{\substack{\treevar \in T \\ \abs{\treevar} \leq r}} \zeta_{\treevar} \oc \dt^{\abs{\treevar}} + \order{\dt^{r + 1}},
        \end{align}
    \end{subequations}
    where the constants in $\order{\cdot}$ are the same as in \cref{thm:LTE} and depend only on $M$, $L$ and the method coefficients (not on $J$ or $Z$). The term $\zeta_{\treevar}$ gives a real number depending only on $\treevar$. The functions $\oc*[\treevar][Z][t]$ and $\oc[\treevar][Z][t]$ are defined as follows:\\[.2em] 
    \begin{subequations}\label{eq:LTE_tree}     
    If $\treevar = [\treeroot^{\ell}]$ then
    \begin{align}\label{eq:LTE_tree:Stage}
        \oc*[\treevar][Z][t] &\coloneqq (I - A \otimes Z)^{-1} \left( \so{\ell+1} \otimes y^{(\ell+1)}(\timevar) \right), \\
        \oc[\treevar][Z][t] &\coloneqq \qo{\ell+1} y^{(\ell+1)}(\timevar) + (b^T \otimes Z) \oc*[\treevar][Z][\timevar].
    \end{align}
    If $\treevar = [\treeroot^{\ell} \, \treevar_1 \dots \treevar_k]$ then 
        \begin{align}\label{eq:LTE_tree:y}
            \oc*[\treevar][Z][t] &\coloneqq 
            (I - A \otimes Z)^{-1} \big((A C^{\ell}) \otimes I\big) \, \G(\timevar) \big( \oc*[\treevar_1][Z][t], \ldots, \oc*[\treevar_k][Z][t] \big), \\
            \label{eq:LTE_tree:psi}
            \oc[\treevar][Z][t] &\coloneqq (b^T \otimes I) (I - A \otimes Z)^{-1} \big(C^{\ell} \otimes I\big) \, \G(\timevar) \big( \oc*[\treevar_1][Z][t], \dots, \oc*[\treevar_k][Z][t] \big). 
    \end{align}
    \end{subequations}
\end{theorem}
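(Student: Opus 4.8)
The plan is to prove the tree expansion by strong induction on the order, reducing \cref{eq:LTE_tree_series} to the per-order coefficient identities
\begin{equation*}
  \dY[0][i] = \sum_{\treevar \in T_i} \zeta_{\treevar}\, \oc*[\treevar][Z][\ts],
  \qquad
  \dy[1][i] = \sum_{\treevar \in T_i} \zeta_{\treevar}\, \oc[\treevar][Z][\ts],
  \qquad i = 1, \dots, r.
\end{equation*}
Granting these, \cref{eq:LTE_tree_series} is immediate from \cref{thm:LTE}: the reindexing $\sum_{i \le r}\sum_{\treevar \in T_i} = \sum_{\abs{\treevar}\le r}$ converts the finite sums $\sum_i \dY[0][i]\dt^i$ and $\sum_i \dy[1][i]\dt^i$ into the claimed tree sums, and the remainders $\varepsilon_Y, \varepsilon_y$ are inherited verbatim, so the $\order{\dt^{r+1}}$ constants are exactly those of \cref{thm:LTE}. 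The weights $\zeta_{\treevar}$ are defined recursively: $\zeta_{[\treeroot^{\ell}]} = 1$ for the bushy trees, and for $\treevar = [\treeroot^{\ell}\,\treevar_1 \cdots \treevar_k]$ with $k \ge 1$,
\begin{equation*}
  \zeta_{\treevar} = \frac{(-1)^{k+1}}{\ell!\,\prod_{\nu}\mu_{\nu}!}\prod_{j=1}^{k}\zeta_{\treevar_j},
\end{equation*}
where $\mu_{\nu}$ are the multiplicities of the distinct trees among $\treevar_1, \dots, \treevar_k$.

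For the inductive step on $\dY[0][i]$, I start from \cref{eq:LTE_coeffs:Y}. The first summand $(I - A\otimes Z)^{-1}(\so{i}\otimes y^{(i)}(\ts))$ equals $\oc*[{[\treeroot^{i-1}]}]$ by \cref{eq:LTE_tree:Stage}, i.e., the unique bushy tree in $T_i$, which covers the $k=0$ case. For the second summand I use that $A\otimes I$ commutes with $A\otimes Z$, hence with $(I - A\otimes Z)^{-1}$, together with $(A\otimes I)(C^{\ell}\otimes I) = (AC^{\ell})\otimes I$, to write $(A\otimes I)(I - A\otimes Z)^{-1}(C^{\ell}\otimes I) = (I - A\otimes Z)^{-1}\big((AC^{\ell})\otimes I\big)$. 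Substituting \cref{eq:LTE_coeffs:g} for $\dg[0][i-1]$ and then the inductive hypothesis $\dY[0][m_j] = \sum_{\treevar_j \in T_{m_j}}\zeta_{\treevar_j}\,\oc*[\treevar_j]$ into each argument, multilinearity of $\G(\ts)$ turns every term into $\oc*[{[\treeroot^{\ell}\treevar_1\cdots\treevar_k]}]$ as given by \cref{eq:LTE_tree:y}. Each $\treevar_j$ is nontrivial here: the choice $m_j = 1$ forces $\treevar_j = \treeroot$ and $\oc*[\treeroot]$ vanishes since $\so{1} = c - A\one = 0$, which is precisely why the standardized form carries only the nonterminal subtrees while the $\ell$ terminal children are absorbed into the $C^{\ell}$ factor.

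The crux is the combinatorial passage from the ordered sums in \cref{eq:LTE_coeffs:g} to the unordered, standardized trees. After substitution, the term indexed by the ordered tuple $(\ell, \treevar_1, \dots, \treevar_k)$ carries the weight $\frac{(-1)^{k+1}}{k!\,\ell!}\prod_j \zeta_{\treevar_j}$ and the factor $\oc*[{[\treeroot^{\ell}\treevar_1\cdots\treevar_k]}]$. Because $\G(\ts)$ is a \emph{symmetric} multilinear map, every ordering of a fixed subtree multiset produces the identical contribution, and a given standardized tree $\treevar = [\treeroot^{\ell}\treevar_1\cdots\treevar_k]$ arises from exactly $k!/\prod_{\nu}\mu_{\nu}!$ such orderings. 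Summing them multiplies the weight by this count and yields precisely the $\zeta_{\treevar}$ above, establishing the stage identity. I expect this symmetry bookkeeping to be the main obstacle: one must show that the multiplicities collapse the ordered multi-index sum into a single tree-indexed coefficient depending on $\treevar$ alone, handling repeated subtrees and cleanly separating the terminal-child count $\ell$ (encoded by $C^{\ell}$) from the genuine subtrees (the arguments of $\G$).

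Finally, the step identity for $\dy[1][i]$ follows by the same mechanism applied to \cref{eq:LTE_coeffs:y}. The terms $\qo{i}y^{(i)}(\ts) + (b^T\otimes Z)(I - A\otimes Z)^{-1}(\so{i}\otimes y^{(i)}(\ts))$ assemble into $\oc[{[\treeroot^{i-1}]}]$ via \cref{eq:LTE_tree:Stage}, while $(b^T\otimes I)(I - A\otimes Z)^{-1}\dg[0][i-1]$ needs no commutation, since $b^T\otimes I$ already appears in front exactly as in \cref{eq:LTE_tree:psi}, and it reproduces the same trees. The decisive point is that both recursions unfold the \emph{same} object $\dg[0][i-1]$, whose arguments are the stage coefficients $\dY[0][m_j]$; consequently the subtree weights $\zeta_{\treevar_j}$ and the symmetry factors entering the two expansions are literally identical, and only the outer operator ($A\otimes I$ versus $b^T\otimes I$) differs, exactly as it distinguishes $\oc*[\treevar]$ from $\oc[\treevar]$. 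Hence $\zeta_{\treevar}$ is shared between the stage and step expansions, as claimed.
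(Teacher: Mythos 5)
Your proposal is correct and takes essentially the same route as the paper: the paper's Appendix~A packages the identical induction-plus-symmetry argument as an abstract recursion lemma (\cref{lem:abstractrec}) instantiated once for $\dY[0][i]$ and once for $\dy[1][i]$, and your recursive weight $\zeta_{\treevar}=\frac{(-1)^{k+1}}{\ell!\,\prod_{\nu}\mu_{\nu}!}\prod_{j}\zeta_{\treevar_j}$ agrees with the paper's combinatorial factor \cref{eq:SigmaFormula} once $\scalvar_{\ell,k}=(-1)^{k+1}/(k!\,\ell!)$ is substituted. The only notable (and correct) addition is that you make explicit the commutation of $A\otimes I$ with $(I-A\otimes Z)^{-1}$ needed to reconcile \cref{eq:LTE_coeffs:Y} with \cref{eq:LTE_tree:y}, a step the paper leaves implicit in its choice of $\bimap_{\ell,k}$.
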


Note that \cref{eq:LTE_tree} defines $\psi$ for any arbitrary set of sufficiently smooth functions $g$, $y$ and suitable matrices $Z$. For any fixed $\tau$, the function $\oc[\treevar][z][t]$ is a rational function of $z \in \C$ with time-dependent coefficients depending on $|\tau| - 1$ derivatives of $g$ and $|\tau|$ derivatives of $y$. In the next section, we formulate conditions on $(A,b,c)$ which ensure that $\oc = 0$ for any such choice $g$, $y$ and suitable matrices $Z$.

\subsection{Semilinear order conditions}\label{subsec:Semilinear_OC}
While $\oc[\treevar][Z][\ts]$ characterizes the LTE, the tensor product structure interweaves method coefficients with differentials.  Ultimately, we seek semilinear order conditions that depend only on the method coefficients.  This section establishes a recursive formula, based on trees, for the order conditions.

We use two algebraic identities. First, a consequence of the Cayley--Hamilton theorem: for any $Z$ satisfying \cref{assump:ODE}, there exists polynomials $P$ and $Q_i$ ($i = 0, \ldots, s-1$), whose coefficients depend on both $Z$ and $A$, for which
\begin{equation}\label{Eq:MatExpansion}
    (I - A \otimes Z)^{-1}
    = P(A \otimes Z)
    = \sum_{i=0}^{s-1} A^i \otimes Q_i(Z).
\end{equation}
The degrees of $P$ and $Q_i$ are bounded by $\nvar s - 1$ and $\nvar - 1$, respectively. 

The second identity is a Kronecker product result for $\G(\timevar)$, and it follows from the fact that $\G(\timevar)$ is $s$ evaluations of the same function $g$. For any set of vectors $\beta_j \in \R^s, u_j \in \R^{\nvar}$ ($j = 1, \ldots, k$), the following holds:
\begin{equation}
\label{Eq:KroneckerIdentity}
    \G(\timevar)(\beta_1 \otimes u_1, \ldots, 
    \beta_k\otimes u_k) = (\beta_1 \times \cdots \times \beta_k)
    \otimes \gk(\timevar)(u_1, \ldots, u_k).
\end{equation}
Here, the notation $\times$ is the element-wise product of vectors.

With these identities, we may extract semilinear order conditions, one for each rooted tree. The simplest setting is the ``bushy'' trees.
\begin{remark}\label{rem:WSO}
    For a bushy tree $\treevar = [\treeroot^\ell]$, with $\ell \geq 0$, we have that
    \begin{align}\nonumber 
        \oc[\treevar][Z][t]
        &= \qo{\ell+1} y^{(\ell+1)}(\timevar) + (b^T \otimes Z) (I - A \otimes Z)^{-1} \left( \so{\ell+1} \otimes y^{(\ell+1)}(\timevar) \right) \\ \label{Eq:ErrWSOPsi}
        &= \qo{\ell+1} y^{(\ell+1)}(\timevar) + \sum_{i=0}^{s - 1} (b^T A^i \so{\ell+1}) Z Q_i(Z) y^{(\ell+1)}(\timevar).
    \end{align}
    Setting the coefficients $\qo{\ell +1} = 0$ and $b^T A^i \so{\ell+1} = 0$ yields the order conditions
    \begin{subequations} \label{eq:linear_conditions}
        \begin{align}
            b^T c^{\ell} &= \frac{1}{\ell + 1}, \\ \label{eq:linear_conditions:WSO}
            b^T A^i \left(\frac{c^{\ell+1}}{\ell + 1} - A c^{\ell} \right) &= 0, \quad\qquad i = 0, \dots, s - 1 .
        \end{align}
    \end{subequations}
    When \cref{eq:ODE} is linear, i.e., $y'(\timevar) = J y(\timevar) + g(t)$, the LTE depends only on the expressions \cref{Eq:ErrWSOPsi} and order conditions \cref{eq:linear_conditions}. Namely, if \cref{eq:linear_conditions} holds for \revision{$\ell = 0, \dots, p-1$} then the LTE for linear problems is \revision{of} order $p+1$. For instance, see \cite{skvortsov2003accuracy,rang2014analysis} \cite[p.~226]{hairer1996solving} for linear ODEs and \cite{ostermann1992runge} for linear PDEs. \Cref{eq:linear_conditions:WSO} was referred to as the \textit{weak stage order} conditions in \cite{rosales2024spatial, ketcheson2020dirk} and has been the subject of recent analysis \cite{biswas2022algebraic,biswas2023design,biswas2023explicit}.
\end{remark}

The structure of $\oc[\treevar][Z][\timevar]$ for the simplest ``non-bushy'' tree is highlighted next.
\begin{example}\label{Example:TallTree}
For $\treevar = \btree{[[[]]]}$,
\begin{align*}
    \oc[\treevar][Z][\timevar]
    &= (b^T \otimes I) (I - A \otimes Z)^{-1} (I \otimes g'(y(\timevar))) (I - A \otimes Z)^{-1} \left( \so{2} \otimes y''(\timevar) \right) \\
    &= (b^T \otimes I) \left( \sum_{i=0}^{s-1} A^i \otimes Q_i(Z) \right) (I \otimes g'(y(\timevar))) \left( \sum_{i=0}^{s-1} A^i \otimes Q_i(Z) \right) \left( \so{2} \otimes y''(\timevar) \right) \\
    &= \sum_{i=0}^{s-1} \sum_{j=0}^{s-1} (b^T A^i A^j \so{2}) Q_i(Z) g'(y(\timevar)) Q_j(Z) y''(\timevar) .
\end{align*}
For this tree the order conditions are $b^T A^i A^j \so{2} = 0$ for all $i, j = 0, \ldots, s-1$. The fact that the indices $i$ and $j$ are redundant for this tree is discussed in \cref{subsec:OC_reduction}.
\end{example}

To systematically characterize all the order conditions, we define a vector space $\Vs_{\treevar} \subseteq \R^s$ for each rooted tree as follows:
\begin{subequations}\label{eq:VSpace}         
    \newline 
    \noindent
    If $\treevar = [\treeroot^{\ell}]$ then
    \begin{equation}\label{eq:VSpaceRoot} 
        \Vs_{\treevar} \coloneqq 
        \textrm{span}\Big\{ A^j \so{\ell + 1} : \forall \; j = 0, 1, \ldots, s-1 \Big\}. 
    \end{equation}    
    \noindent If $\treevar = [\treeroot^{\ell} \, \treevar_1 \dots \treevar_k]$ then 
    \begin{equation}
    \label{eq:VSpaceGen}
        \Vs_{\treevar} \coloneqq \textrm{span} \Big\{ 
        A^{j+1} C^\ell \big( \beta_1 \times \cdots \times \beta_k \big) :
        \forall j = 0, \ldots, s-1, \; \forall \beta_i \in \Vs_{\treevar_i} \; (i = 1, \ldots, k) 
    \Big\}.
    \end{equation}
\end{subequations}
The spaces $\Vs_{\treevar}$ then provide a basis for expanding $\oc*[\treevar][Z][\timevar]$.
\begin{lemma}\label{Lem:VectorRepPsi}
    The vectors $\oc*[\treevar][Z][\timevar]$ defined in \cref{eq:VSpace} can be expressed as
    \begin{equation}\label{Eq:BasisPsi}
        \oc*[\treevar][Z][\timevar] = \sum_{j = 1}^{\dim \Vs_{\treevar}} \beta_j^{\treevar} \otimes u_j^{\treevar},
    \end{equation}
    where $\beta_{j}^{\treevar} \in \Vs_{\treevar}$ and $u_j^{\treevar} = u_j^{\treevar}(Z,t) \in \R^{\nvar}$ for $j=1, \ldots, \dim \Vs_{\treevar}$.
\end{lemma}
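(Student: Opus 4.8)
The plan is to establish the representation \cref{Eq:BasisPsi} by structural induction on the tree $\treevar$, combining the matrix expansion \cref{Eq:MatExpansion}, the Kronecker identity \cref{Eq:KroneckerIdentity}, and a concluding linear-algebra regrouping. The guiding observation is that $\oc*[\treevar][Z][\timevar]$ lives in $\R^s \otimes \R^\nvar$, so it is enough to exhibit \emph{any} finite representation $\sum_m \beta_m \otimes u_m$ whose left factors $\beta_m$ all lie in $\Vs_\treevar$; fixing a basis $\{\beta_1^\treevar, \dots, \beta_{\dim \Vs_\treevar}^\treevar\}$ of $\Vs_\treevar$, rewriting each $\beta_m$ in this basis, and using bilinearity of $\otimes$ to absorb the scalar coefficients and sum the $\R^\nvar$-factors attached to each basis vector then collapses the sum to exactly $\dim \Vs_\treevar$ terms $\beta_j^\treevar \otimes u_j^\treevar$ of the claimed form, with $u_j^\treevar = u_j^\treevar(Z,t) \in \R^\nvar$.

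For the base case $\treevar = [\treeroot^\ell]$, I would substitute \cref{Eq:MatExpansion} into the definition \cref{eq:LTE_tree:Stage} and distribute over the Kronecker product, giving
\begin{equation*}
    \oc*[\treevar][Z][\timevar]
    = \sum_{i=0}^{s-1} \big(A^i \otimes Q_i(Z)\big)\big(\so{\ell+1} \otimes y^{(\ell+1)}(\timevar)\big)
    = \sum_{i=0}^{s-1} \big(A^i \so{\ell+1}\big) \otimes \big(Q_i(Z)\, y^{(\ell+1)}(\timevar)\big).
\end{equation*}
Every left factor $A^i \so{\ell+1}$ belongs to $\Vs_\treevar$ by the definition \cref{eq:VSpaceRoot}, so the regrouping above applies directly.

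For the inductive step $\treevar = [\treeroot^\ell \, \treevar_1 \dots \treevar_k]$, I assume each subtree satisfies $\oc*[\treevar_i][Z][\timevar] = \sum_j \beta_j^{\treevar_i} \otimes u_j^{\treevar_i}$ with $\beta_j^{\treevar_i} \in \Vs_{\treevar_i}$, insert these into \cref{eq:LTE_tree:y}, and expand using the multilinearity of $\G(\timevar)$ to obtain a $k$-fold sum of terms $\G(\timevar)\big(\beta_{j_1}^{\treevar_1} \otimes u_{j_1}^{\treevar_1}, \dots, \beta_{j_k}^{\treevar_k} \otimes u_{j_k}^{\treevar_k}\big)$. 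Applying the Kronecker identity \cref{Eq:KroneckerIdentity} to each term, then the factor $(A C^\ell) \otimes I$, and finally the expansion \cref{Eq:MatExpansion} of $(I - A \otimes Z)^{-1}$ yields a finite sum whose generic left factor is
\begin{equation*}
    A^{i+1} C^\ell \big(\beta_{j_1}^{\treevar_1} \times \cdots \times \beta_{j_k}^{\treevar_k}\big), \qquad i = 0, \dots, s-1,
\end{equation*}
tensored with the right factor $Q_i(Z)\, \gk(\timevar)\big(u_{j_1}^{\treevar_1}, \dots, u_{j_k}^{\treevar_k}\big) \in \R^\nvar$. Since $\beta_{j_m}^{\treevar_m} \in \Vs_{\treevar_m}$ by hypothesis, each such left factor is precisely one of the spanning vectors in \cref{eq:VSpaceGen} and hence lies in $\Vs_\treevar$, so the regrouping from the first paragraph again applies.

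The step needing the most care, and the one I view as the main obstacle, is verifying that the left factors produced at every level of the recursion never leave $\Vs_\treevar$; this is what legitimizes collapsing the expansion to $\dim \Vs_\treevar$ tensors. The verification is not deep but must be done term by term, matching the outputs of \cref{Eq:MatExpansion,Eq:KroneckerIdentity} against the generators in \cref{eq:VSpaceRoot,eq:VSpaceGen}: the role of \cref{Eq:MatExpansion} is to convert the stiff resolvent into powers $A^i \otimes Q_i(Z)$ that increment the $A$-power on the left while confining all $Z$-dependence to $Q_i(Z)$ on the right, and the role of \cref{Eq:KroneckerIdentity} is to split each application of $\G(\timevar)$ into an element-wise product $\beta_{j_1} \times \cdots \times \beta_{j_k}$ on the left and a genuine derivative $\gk(\timevar)$ on the right. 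These two mechanisms are exactly mirrored in the two cases of the definition \cref{eq:VSpace}, which is why the induction closes; the resulting $u_j^\treevar$ inherit their dependence on $Z$ and $t$ from the $Q_i(Z)$ and the differentials of $g$ and $y$.
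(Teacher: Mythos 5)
Your proposal is correct and follows essentially the same route as the paper's proof: substitute \cref{Eq:MatExpansion} into \cref{eq:LTE_tree:Stage} for the base case $\treevar = [\treeroot^{\ell}]$, then proceed by induction on the tree, inserting the subtree representations into $\G(\timevar)$, applying \cref{Eq:KroneckerIdentity}, and expanding the resolvent via \cref{Eq:MatExpansion} so that the left tensor factors land exactly on the generators of $\Vs_{\treevar}$ in \cref{eq:VSpace}. Your explicit regrouping step --- collapsing the resulting finite sum onto a basis of $\Vs_{\treevar}$ to get exactly $\dim \Vs_{\treevar}$ terms --- is a detail the paper leaves implicit, and is a welcome addition rather than a deviation.
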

\begin{proof}
First observe that when $\treevar = [\treeroot^{\ell}]$, substituting \cref{Eq:MatExpansion} into \cref{eq:LTE_tree:Stage} yields
    \begin{equation}\label{Eq:BaseCase}
        \oc*[\treevar][Z][\timevar] =  \sum_{j = 0}^{s-1} A^{j} \so{\ell + 1} \otimes Q_j(Z) y^{(\ell+1)}(\timevar) .
    \end{equation}
    Since \cref{Eq:BaseCase} has the form \cref{Eq:BasisPsi}, the Lemma holds for $\treevar = [\treeroot^{\ell}]$.

    Next we proceed by strong induction on the number of vertices, $|\treevar|$, of $\treevar$. The base case of $\treevar = [\treeroot]$ corresponds to \cref{Eq:BaseCase} with $\ell = 1$ and is already established ($\treevar = \treeroot$ is trivial since $\oc*[\treeroot][Z][\timevar] = 0$). 

    Fix $m > 1$ and assume that \cref{Eq:BasisPsi} holds for all $|\treevar| \leq m$. Now consider any tree $\treevar$ for which $|\treevar| = m + 1$.  If $\treevar = [\treeroot^{m}]$ then we are done. If $\treevar = [\treeroot^{\ell} \treevar_1 \cdots \treevar_k]$ then $|\treevar_j| \leq m$, so that the induction hypothesis holds for each $\oc*[\treevar_j][Z][\timevar]$. 

    For each $\oc*[\treevar_j][Z][\timevar]$, substitute the expression \cref{Eq:BasisPsi} into $\G$ and use \cref{Eq:KroneckerIdentity} to obtain
    \begin{align}\nonumber
            & \quad \G(\timevar) \Big( 
                \sum_{j_1} \beta_{j_1}^{\treevar_1} \otimes u_{j_1}^{\treevar_1} , \dots, 
                \sum_{j_k} \beta_{j_k}^{\treevar_k} \otimes u_{j_k}^{\treevar_k} \Big) \\
            &=\sum_{j_1, \ldots, j_k} \G(\timevar) \Big( \beta_{j_1}^{\treevar_1} \otimes u_{j_1}^{\treevar_1} , \ldots, \beta_{j_k}^{\treevar_k} \otimes u_{j_k}^{\treevar_k} \Big) \nonumber \\ \label{Eq:GPsiSum}
            &= \sum_{j_1, \ldots, j_k} \Big(\beta_{j_1}^{\treevar_1}\times \cdots \times \beta_{j_k}^{\treevar_k}\Big) \otimes \gk(\timevar)\Big( u_{j_1}^{\treevar_1} , \ldots, u_{j_k}^{\treevar_k} \Big).
    \end{align}
    Here the summation variables $j_\nu$ ($\nu = 1, \ldots, k$) run from $1$ through $\dim \Vs_{\treevar_i}$. Substituting \cref{Eq:GPsiSum} and \cref{Eq:MatExpansion} into \cref{eq:LTE_tree:y} yields the result.
\end{proof}
The semilinear order conditions then arise as orthogonality conditions between the vector $b$ and expressions involving the spaces $\Vs_{\treevar}$. 
\begin{definition}[Semilinear Order Conditions]\label{def:semilinear_order} A Runge--Kutta method has semilinear order $\psl \geq 1$ if for all trees $|\treevar| \leq \psl$ the following algebraic conditions hold:
    \begin{subequations}\label{eq:OC}    
        \newline
        \noindent When $\treevar = [\treeroot^{\ell}]$, the conditions are
        \begin{equation}\label{eq:OC_Root} 
            \qo{\ell+1} = 0 \qquad \textrm{and} \qquad b^T \beta = 0 \qquad \forall \beta \in V_{\treevar}.
        \end{equation}    
        \noindent When $\treevar = [\treeroot^{\ell} \, \treevar_1 \dots \treevar_k]$, the conditions are
        \begin{equation}\label{eq:OC_GenTree} 
            b^T A^{j} C^{\ell} \Big( \beta_1 \times \cdots \times \beta_k \Big) = 0         
        \end{equation}
        for all $j = 0, \ldots, s-1$ and all sets of vectors $\beta_1 \in \Vs_{\treevar_1}$, $\ldots$, $\beta_k \in \Vs_{\treevar_{k}}$.
    \end{subequations}
\end{definition}

\begin{remark}
    Semilinear order is weaker than stage order; and a Runge--Kutta method has stage order $q \leq \psl$.
\end{remark}

Note that \cref{eq:OC} are polynomial equations defined in terms of $(A,b,c)$. As a result, \cref{def:semilinear_order} is a well-defined property for any Runge--Kutta method (regardless of whether the method is implicit, explicit, or satisfies AS- or ASI-stability). 

\begin{theorem}[Main Result for LTE]\label{thm:MainLTE} 
    Let $(A,b,c)$ be an AS- and ASI-stable Runge--Kutta method with semilinear order $\psl$. 
    Then the Runge--Kutta method applied to any initial value problem \cref{eq:ODE} satisfying \cref{assump:ODE}, has 
    \begin{equation}\label{Eq:LTE_MainResult}
        \oc[\treevar][Z][\timevar] = 0 \qquad \textrm{for all} \qquad |\treevar| \leq \psl.
    \end{equation}
    In particular, there are constants $D$, $\widetilde{h}$ depending only on $L$, $M$, and the method coefficients (but not on $J$ or $Z$) for which 
    \begin{equation}\label{Eq:LTE_ErrorEst}
        \norm{\dy[1]} \leq D \dt^{\psl + 1} \qquad \forall h \in [0, \widetilde{h}) .
    \end{equation}
\end{theorem}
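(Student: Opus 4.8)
The plan is to establish the two claims in order: first that semilinear order $\psl$ forces the weight functions $\oc[\treevar][Z][\timevar]$ to vanish identically for every tree with $\treeorder{\treevar} \le \psl$, and then that this vanishing, inserted into the tree-indexed expansion of \cref{thm:LTE_trees}, produces the stiffness-uniform bound \cref{Eq:LTE_ErrorEst}.

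For the vanishing, I would argue tree by tree, using the representation $\oc*[\treevar][Z][\timevar] = \sum_j \beta_j^{\treevar} \otimes u_j^{\treevar}$ with $\beta_j^{\treevar} \in \Vs_{\treevar}$ furnished by \cref{Lem:VectorRepPsi}; no fresh induction is needed, since that lemma already holds for all trees, including the subtrees. For a bushy tree $\treevar = [\treeroot^{\ell}]$, the identity \cref{Eq:ErrWSOPsi} in \cref{rem:WSO} gives $\oc[\treevar][Z][\timevar] = \qo{\ell+1} y^{(\ell+1)}(\timevar) + \sum_{i=0}^{s-1}(b^T A^i \so{\ell+1}) Q_i(Z) y^{(\ell+1)}(\timevar)$, and the root conditions \cref{eq:OC_Root} --- namely $\qo{\ell+1} = 0$ together with $b^T\beta = 0$ for all $\beta \in \Vs_{\treevar} = \operatorname{span}\{A^j \so{\ell+1}\}$ --- annihilate every term. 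For a general tree $\treevar = [\treeroot^{\ell}\,\treevar_1 \cdots \treevar_k]$, I would substitute each $\oc*[\treevar_\nu][Z][\timevar]$ into \cref{eq:LTE_tree:psi}, push the arguments through $\G(\timevar)$ with the Kronecker identity \cref{Eq:KroneckerIdentity}, and expand $(I - A\otimes Z)^{-1}$ via \cref{Eq:MatExpansion}; this is the computation already performed in the proof of \cref{Lem:VectorRepPsi}, the only change being that the prefactor $(I - A\otimes Z)^{-1}(AC^\ell \otimes I)$ of $\oc*$ is replaced by the prefactor $(b^T \otimes I)(I - A\otimes Z)^{-1}(C^\ell \otimes I)$ of $\oc$. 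The result is
\[
    \oc[\treevar][Z][\timevar] = \sum_{j_1, \ldots, j_k} \sum_{i=0}^{s-1} \big(b^T A^i C^\ell (\beta_{j_1}^{\treevar_1} \times \cdots \times \beta_{j_k}^{\treevar_k})\big)\, Q_i(Z)\, \gk(\timevar)\big(u_{j_1}^{\treevar_1}, \ldots, u_{j_k}^{\treevar_k}\big).
\]
Because each $\beta_{j_\nu}^{\treevar_\nu} \in \Vs_{\treevar_\nu}$, every scalar coefficient is an instance of the left-hand side of \cref{eq:OC_GenTree} (with $j = i$) and thus vanishes, so $\oc[\treevar][Z][\timevar] \equiv 0$. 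This proves \cref{Eq:LTE_MainResult}.

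For the error estimate, I would apply \cref{thm:LTE_trees} with regularity order $r = \psl$, which expands the local truncation error as $\dy[1] = \sum_{\treeorder{\treevar} \le \psl} \zeta_{\treevar}\, \oc[\treevar][Z][\ts]\, \dt^{\treeorder{\treevar}} + \order{\dt^{\psl+1}}$, the remainder constant depending only on $M$, $L$, and the method coefficients but not on $Z$. Since every $\oc[\treevar][Z][\ts] = 0$ for $\treeorder{\treevar} \le \psl$ by the first part, the entire leading sum collapses and only the $\order{\dt^{\psl+1}}$ remainder survives, giving \cref{Eq:LTE_ErrorEst} uniformly for all $Z$ with $\mu(Z) \le 0$ and all $\dt \in [0, \widetilde{h})$.

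The main obstacle --- and the step meriting the most care --- is the general-tree computation: one must confirm that passing the tensor-product representation through $\G(\timevar)$ and $(I - A\otimes Z)^{-1}$ yields coefficients that are \emph{exactly} the order-condition quantities $b^T A^i C^\ell(\beta_1 \times \cdots \times \beta_k)$, and that the summation indices $j_\nu$ sweep out spanning sets of the $\Vs_{\treevar_\nu}$ so that \cref{eq:OC_GenTree}, which quantifies over all $\beta_i \in \Vs_{\treevar_i}$, applies term by term. Since this algebra merely replays the proof of \cref{Lem:VectorRepPsi} with a different outer prefactor, it is bookkeeping rather than a new idea; and the uniformity of $D$ in $Z$ requires no extra work, being inherited directly from \cref{thm:LTE_trees}, which already confines all $Z$-dependence to the now-vanishing weight functions.
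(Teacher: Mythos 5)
Your proposal is correct and follows essentially the same route as the paper: bushy trees are handled via the expansion \cref{Eq:ErrWSOPsi} and the root conditions \cref{eq:OC_Root}, general trees by pushing the representation of \cref{Lem:VectorRepPsi} through \cref{eq:LTE_tree:psi} using \cref{Eq:KroneckerIdentity} and \cref{Eq:MatExpansion} so that every scalar coefficient is an instance of \cref{eq:OC_GenTree}, and the estimate \cref{Eq:LTE_ErrorEst} by inserting the vanishing into \cref{eq:LTE_tree_series:y}. Your explicit display of the coefficients $b^T A^i C^{\ell}(\beta_{j_1}^{\treevar_1}\times\cdots\times\beta_{j_k}^{\treevar_k})$ simply spells out what the paper leaves as ``identical to the last step in \cref{Lem:VectorRepPsi}.''
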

\begin{proof}
Under the assumptions, the functions $\oc[\treevar][Z][\timevar]$ and $\oc*[\treevar][Z][\timevar]$ exist with $\oc*[\treevar][Z][\timevar]$ given by \cref{Eq:BasisPsi} for all $h \in [0, \widetilde{h})$ (with $\widetilde{h}$ from \cref{thm:exist_unique}). Note that \cref{rem:WSO} already established that \cref{eq:OC_Root} implies $\oc[[\treeroot^{\ell}]] = 0$.
For $\treevar \neq [\treeroot^{\ell}]$ the proof that $\oc = 0$ is identical to the last step in \cref{Lem:VectorRepPsi}. Substituting \cref{Eq:GPsiSum} and \cref{Eq:MatExpansion} into \cref{eq:LTE_tree:psi} yields the result.  Finally, substituting \cref{Eq:LTE_MainResult} into \cref{eq:LTE_tree_series:y} yields \cref{Eq:LTE_ErrorEst}.
\end{proof}

The semilinear order conditions, up to order five, are presented in \cref{tab:order_conditions}. For the purposes of clarity, the formulation in \cref{tab:order_conditions} uses indices $i_1, i_2, \ldots \in \{0, \ldots, s-1\}$ to define a set vectors that span $\Vs_{\treevar}$ as opposed to a minimal set of vectors that define a basis for $\Vs_{\treevar}$.
As a result, many conditions become linearly dependent.

\begin{table}[ht!]
    \centering
    \begin{tabular}{r|c|l|l}
        Label & Tree $\treevar$ & Order Condition $(\forall i_1, i_2, i_3, i_4 \in \{0, \dots, s - 1 \})$ & Implied By \\ \hline
        1a & \btree{[]} & $0 = 1 - b^T \one$ & $B(1)$ \\ \hline
        2a & \btree{[[]]} & $0 = \frac{1}{2} - b^T c = b^T A^{i_1} \left( \frac{c^{2}}{2} - A c \right)$ & $B(2), C(2)$ \\ \hline
        3a & \btree{[[][]]} & $0 = \frac{1}{6} - \frac{b^T c^2}{2} =  b^T A^{i_1} \left( \frac{c^{3}}{6} - \frac{A c^2}{2} \right)$ & $B(3), C(3)$ \\
        3b & \btree{[[[]]]} & $0 = b^T A^{i_1 + i_2} \left( \frac{c^{2}}{2} - A c \right)$ & 2a \\ \hline
        4a & \btree{[[][][]]} & $0 = \frac{1}{24} - \frac{b^T c^3}{6} =  b^T A^{i_1} \left( \frac{c^{4}}{24} - \frac{A c^3}{6} \right)$ & $B(4), C(4)$ \\
        4b & \btree{[[[]][]]} & $0 = b^T A^{i_1} C A^{i_2} \left( \frac{c^{2}}{2} - A c \right)$ & $C(2)$ \\
        4c & \btree{[[[][]]]} & $0 = b^T A^{i_1 + i_2} \left( \frac{c^{3}}{6} - \frac{A c^2}{2} \right)$ & 3a \\
        4d & \btree{[[[[]]]]} & $0 = b^T A^{i_1 + i_2 + i_3 + 1} \left( \frac{c^{2}}{2} - A c \right)$ & 2a \\ \hline
        5a & \btree{[[][][][]]} & $0 = \frac{1}{120} - \frac{b^T c^4}{24} =  b^T A^{i_1} \left( \frac{c^{5}}{120} - \frac{A c^4}{24} \right)$ & $B(5), C(5)$ \\
        5b & \btree{[[[]][][]]} & $0 = b^T A^{i_1} C^2 A^{i_2} \left( \frac{c^{2}}{2} - A c \right)$ & $C(2)$ \\
        5c & \btree{[[[]][[]]]} & $0 = b^T A^{i_1} \left( \left( A^{i_2} \left( \frac{c^{2}}{2} - A c \right) \right) \times \left( A^{i_3} \left( \frac{c^{2}}{2} - A c \right) \right) \right)$ & $C(2)$ \\
        5d & \btree{[[[][]][]]} & $0 = b^T A^{i_1} C A^{i_2} \left( \frac{c^{3}}{6} - \frac{A c^2}{2} \right)$ & $C(3)$ \\
        5e & \btree{[[[[]]][]]} & $0 = b^T A^{i_1} C A^{i_2 + i_3 + 1} \left( \frac{c^{2}}{2} - A c \right)$ & 4b \\
        5f & \btree{[[[][][]]]} & $0 = b^T A^{i_1 + i_2} \left( \frac{c^{4}}{24} - \frac{A c^3}{6} \right)$ & 4a \\
        5g & \btree{[[[[]][]]]} & $0 = b^T A^{i_1 + i_2 + 1} C A^{i_3} \left( \frac{c^{2}}{2} - A c \right)$ & 4b \\
        5h & \btree{[[[[][]]]]} & $0 = b^T A^{i_1 + i_2 + i_3 + 1} \left( \frac{c^{3}}{6} - \frac{A c^2}{2} \right)$ & 3a \\
        5i & \btree{[[[[[]]]]]} & $0 = b^T A^{i_1 + i_2 + i_3 + i_4 + 2} \left( \frac{c^{2}}{2} - A c \right)$ & 2a       
    \end{tabular}
    \caption{Semilinear order conditions associated with trees up to order five. Of the 17 trees shown, eight are redundant, while the remaining nine can be satisfied with the simplifying assumptions \cref{eq:simplifying_assumptions}. In order condition 5c, the $\times$ denotes an element-wise vector product.}
    \label{tab:order_conditions}
\end{table}

\subsection{Reduction of the semilinear order conditions} \label{subsec:OC_reduction}
Not every tree, corresponding to a row in \cref{tab:order_conditions}, yields an independent order condition. Some order conditions are implied by lower order conditions. For instance, \cref{tab:order_conditions}(3b) follows from \cref{tab:order_conditions}(2a): the Cayley--Hamilton theorem implies $A^{i + j}$ is a linear combination of $A^k$ for $k = 0, \ldots, s-1$. Hence, the order conditions in \cref{tab:order_conditions}(2a), i.e., $b^T A^k \so{2} = 0$ ($k = 0, \ldots, s-1$), are sufficient to ensure \cref{tab:order_conditions}(3b) hold.

More generally, some trees (those with certain internal vertices) can be removed from the set of semilinear order conditions.

\begin{lemma}\label{lem:redundtrees} Suppose $\treevar$ has a vertex $v$ with exactly one child, and the child is not a leaf.  Let $\tilde{\treevar}$ be the tree obtained by suppressing $v$ from $\treevar$.  Then $V_{\treevar} \subseteq V_{\tilde{\treevar}}$. If the semilinear order conditions for $\tilde{\treevar}$ hold, then so do the conditions for $\treevar$. 
\end{lemma}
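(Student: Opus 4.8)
The proof naturally splits into the two assertions: the space inclusion $\Vs_{\treevar}\subseteq\Vs_{\tilde{\treevar}}$, and the implication between order conditions. The structural fact underpinning both is that every space $\Vs_{\sigma}$ is invariant under $A$, i.e.\ $A\,\Vs_{\sigma}\subseteq\Vs_{\sigma}$. I would establish this first. By \cref{eq:VSpace}, each $\Vs_{\sigma}$ is the span of vectors $A^{m}\xi$ with $m$ ranging over a block of $s$ consecutive exponents ($0,\ldots,s-1$ for a bushy tree, $1,\ldots,s$ for a general one) and $\xi$ a fixed family of ``seed'' vectors ($\so{\ell+1}$ for $\treevar=[\treeroot^{\ell}]$, and $C^{\ell}(\beta_1\times\cdots\times\beta_k)$ for $\treevar=[\treeroot^{\ell}\treevar_1\cdots\treevar_k]$). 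Multiplying any generator by $A$ raises the exponent by one, and the single exponent that leaves the block, $A^{s}\xi$ or $A^{s+1}\xi$, is reabsorbed by the Cayley--Hamilton relation $A^{s}=\sum_{i=0}^{s-1}c_iA^{i}$ (the same identity used for \cref{Eq:MatExpansion}). Hence $A\,\Vs_{\sigma}\subseteq\Vs_{\sigma}$ for every tree $\sigma$.

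The ``not a leaf'' hypothesis guarantees that, in standardized form, the subtree rooted at $v$ is $[\treevar_w]$ with $\treevar_w\neq\treeroot$, and that suppressing $v$ replaces this subtree by $\treevar_w$, which is again $\neq\treeroot$; thus the operation is a clean subtree swap preserving standardized form. Using \cref{eq:VSpaceGen} with $\ell=0,\,k=1$ gives $\Vs_{[\treevar_w]}=\mathrm{span}\{A^{j+1}\beta:\beta\in\Vs_{\treevar_w}\}$, and $A$-invariance of $\Vs_{\treevar_w}$ immediately yields the local inclusion $\Vs_{[\treevar_w]}\subseteq\Vs_{\treevar_w}$. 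I would then prove $\Vs_{\treevar}\subseteq\Vs_{\tilde{\treevar}}$ by induction on $\abs{\treevar}$: if $v$ is the root, this is exactly the local inclusion; otherwise $v$ lies in a single root subtree $\treevar_{i_0}$, the induction hypothesis gives $\Vs_{\treevar_{i_0}}\subseteq\Vs_{\tilde{\treevar}_{i_0}}$, and feeding this into \cref{eq:VSpaceGen} propagates the inclusion to the root, since the elementwise product $\times$ is multilinear and $\mathrm{span}$ is monotone in each argument.

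For the order-condition implication I would again distinguish the two cases. When $v$ is not the root, $\treevar$ is automatically of general form, and $\treevar$ and $\tilde{\treevar}$ share the same root exponent $\ell$ and the same list of subtree slots, differing only in one slot where $\Vs_{\treevar_{i_0}}\subseteq\Vs_{\tilde{\treevar}_{i_0}}$ (by the first part of the lemma applied to $\treevar_{i_0}$); hence the equations \cref{eq:OC_GenTree} for $\treevar$ are literally a sub-family, obtained by restricting $\beta_{i_0}$ to the smaller space, of those for $\tilde{\treevar}$, and the implication is immediate. When $v$ is the root, $\treevar=[\treevar_w]$ and $\tilde{\treevar}=\treevar_w$; the conditions \cref{eq:OC_GenTree} for $\treevar$ (with $\ell=0,k=1$) reduce, on taking $j=0$ and using $A$-invariance of $\Vs_{\treevar_w}$, to the single requirement $b^{T}\beta=0$ for all $\beta\in\Vs_{\treevar_w}$. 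This is part of \cref{eq:OC_Root} when $\treevar_w$ is bushy; and when $\treevar_w=[\treeroot^{\ell_w}\sigma_1\cdots\sigma_m]$ is general, the conditions \cref{eq:OC_GenTree} for $\treevar_w$ assert $b^{T}A^{i}C^{\ell_w}(\gamma_1\times\cdots\times\gamma_m)=0$ for $i=0,\ldots,s-1$, which extend to all powers of $A$ via Cayley--Hamilton and therefore annihilate every generator $A^{i+1}C^{\ell_w}(\gamma_1\times\cdots\times\gamma_m)$ of $\Vs_{\treevar_w}$, again giving $b^{T}\beta=0$ on $\Vs_{\treevar_w}$.

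The delicate points are twofold. First, the bookkeeping of powers of $A$: $\Vs_{\treevar}$ is built from $A^{j+1}$ while \cref{eq:OC_GenTree} uses $A^{j}$, so the equivalence between ``$b$ orthogonal to $\Vs_{\treevar_w}$'' and the order conditions must be mediated by Cayley--Hamilton rather than by direct index matching. Second, the standardized-form conventions must be tracked so that leaves are never promoted to subtree slots---this is precisely where ``the child is not a leaf'' is indispensable, since a leaf child would make the local object $[\treeroot^{1}]$ bushy and invalidate the $\ell=0,\,k=1$ reduction. I expect the root case combined with this power-of-$A$ matching to require the most care; the propagation up the tree is, by contrast, routine once multilinearity of $\times$ and $A$-invariance are in hand.
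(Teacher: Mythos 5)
Your proof is correct and follows essentially the same route as the paper: establish $A$-invariance of each $V_{\treevar}$ via Cayley--Hamilton, obtain the local inclusion $V_{[\treevar_w]}\subseteq V_{\treevar_w}$ from the $\ell=0$, $k=1$ case of \cref{eq:VSpaceGen}, and propagate it to the root by induction using the monotonicity of the span construction in each subtree slot. You are somewhat more explicit than the paper about deducing the order-condition implication (the root versus non-root cases and the $A^{j}$ versus $A^{j+1}$ index matching), which the paper leaves largely implicit after proving the space inclusion, but this is a matter of filling in detail rather than a different argument.
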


Trees that do not satisfy the conditions in \cref{lem:redundtrees} are \emph{semi-lone-child-avoiding} (see \cite[A331934]{oeis}).  For example, the following two trees have a single vertex that can be suppressed to give a semi-lone-child-avoiding tree:
\begin{equation*}
    \tau =
    \begin{forest}
    btree [[,name=test,[[]]][]];
    \draw[<-,thick] ([xshift=0.15em]test.east) -- ++(0.75em,0) node[anchor=west,inner sep=0.1em] {$v$};
    \end{forest}
    \ \xrightarrow{\text{suppress } v} \ \
    \widetilde{\tau} = \btree{[[[]][]]}
\qquad\text{and}\qquad
    \tau =
    \begin{forest}
    btree [,name=test,[[][][]]];
    \draw[<-,thick] ([xshift=0.15em]test.east) -- ++(0.75em,0) node[anchor=west,inner sep=0.1em] {$v$};
    \end{forest}
    \ \ \xrightarrow{\text{suppress } v} \ \
    \widetilde{\tau} = \btree{[[][][]]}\;.
\end{equation*}

The proof of \cref{lem:redundtrees} makes use of two basic facts of the vectors spaces $\Vs_{\treevar}$. First, for any rooted tree $\treevar$, the space $\Vs_{\treevar}$ is $A$-invariant, i.e., $A u \in \Vs_{\treevar}$ for all $u \in \Vs_{\treevar}$. 
Secondly, if $\treevar = [\treeroot^{\ell} \treevar_1 \treevar_2 \cdots \treevar_k]$ and $\tilde{\treevar} = [\treeroot^{\ell} \tilde{\treevar}_1 \tilde{\treevar}_2 \cdots \tilde{\treevar}_k]$ are two trees satisfying $\Vs_{\treevar_j} \subseteq \Vs_{\tilde{\treevar}_j}$ for all $j = 1, \ldots, k$, then $\Vs_{\treevar} \subseteq \Vs_{\tilde{\treevar}}$.  

\begin{proof}[Proof of {\cref{lem:redundtrees}}]
    Under the assumptions in the theorem we have $\treevar \neq [\treeroot^{\ell}]$.
    First, suppose that $v$ is the root, in which case $\treevar = [\tilde{\treevar}]$. Then using the definition of \cref{eq:VSpaceGen} for both $\Vs_{\treevar}$ and $\Vs_{\tilde{\treevar}}$ implies (using the $A$-invariance of $\Vs_{\tilde{\treevar}}$):
    \begin{equation}\label{Eq:Inclusion}
        \Vs_{\treevar} = \textrm{span}\{ A^{j+1}\beta :  \beta \in \Vs_{\tilde{\treevar}}, \forall j = 0, \ldots, s-1 \} \subseteq \Vs_{\tilde{\treevar}}.
    \end{equation}
    If $v$ is not the root, let $\treevar_{v}$ be the subtree with root $v$ and set $\treevar_v = [\tilde{\treevar}_v]$. Then applying the same argument in \cref{Eq:Inclusion} to $\treevar_v$ yields $\Vs_{\treevar_v} \subseteq \Vs_{\tilde{\treevar}_v}$. 
    Let $p$ be the parent of $v$. The subtree of $\treevar$ with root $p$ has the form $\treevar_{p} = [ \treeroot^{\ell} \treevar_1 \cdots \treevar_k \treevar_v]$. The same subtree of $\tilde{\treevar}$ has the form $\tilde{\treevar}_{p} = [ \treeroot^{\ell} \treevar_1 \cdots \treevar_k \tilde{\treevar}_v]$. Hence, by the inclusion property of $\Vs$, $\Vs_{\treevar_p} \subseteq \Vs_{\tilde{\treevar}_{p}}$ since $\Vs_{\treevar_v} \subseteq \Vs_{\tilde{\treevar}_v}$. Applying this argument recursively by ascending the tree $\treevar$ yields $\Vs_{\treevar} \subseteq \Vs_{\tilde{\treevar}}$.
\end{proof}

\begin{corollary}[Reduction of semilinear order conditions]\label{cor:reduct_oc} A Runge--Kutta scheme has semilinear order $\psl$ if the conditions \cref{eq:OC} hold for all semi-lone-child-avoiding trees $\treevar$ satisfying $|\treevar| \leq \psl$.   
\end{corollary}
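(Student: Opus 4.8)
The plan is to prove the statement by strong induction on the number of vertices $|\treevar|$, using \cref{lem:redundtrees} as the single inductive engine. Concretely, I would show that under the stated hypothesis — that \cref{eq:OC} holds for every semi-lone-child-avoiding tree of order at most $\psl$ — the conditions \cref{eq:OC} in fact hold for \emph{every} tree $\treevar$ with $|\treevar| \leq \psl$, which by \cref{def:semilinear_order} is precisely the assertion that the method has semilinear order $\psl$.

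The base cases are the smallest trees $\treeroot$ and $[\treeroot]$: the former has no internal vertex, and in the latter the unique child of the root is a leaf, so neither admits a vertex of the type suppressed in \cref{lem:redundtrees}. Both are therefore (vacuously) semi-lone-child-avoiding, and their conditions hold directly by hypothesis. For the inductive step, I would fix a tree $\treevar$ with $|\treevar| = n \leq \psl$ and assume \cref{eq:OC} holds for all trees with strictly fewer vertices. If $\treevar$ is itself semi-lone-child-avoiding, its conditions hold by hypothesis and there is nothing to prove. Otherwise, by the very definition of semi-lone-child-avoiding, $\treevar$ contains a vertex $v$ with exactly one child that is not a leaf — exactly the configuration demanded by \cref{lem:redundtrees}. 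Suppressing $v$ yields a tree $\tilde{\treevar}$ with $|\tilde{\treevar}| = n - 1 < n$ (so the hypothesis, which constrains all trees of order $\leq \psl$, still applies to it), the induction hypothesis gives that \cref{eq:OC} holds for $\tilde{\treevar}$, and \cref{lem:redundtrees} then transfers these conditions to $\treevar$. This closes the induction.

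The argument is essentially routine once \cref{lem:redundtrees} is available, so I do not expect a substantial obstacle. The only points requiring genuine care are (i) confirming that \emph{failing} to be semi-lone-child-avoiding is equivalent to possessing a vertex of exactly the kind the lemma suppresses, so that the lemma is always applicable in the non-base case; and (ii) checking that suppression strictly decreases $|\treevar|$ and returns a legitimate rooted tree in standardized form $[\treeroot^{\ell} \treevar_1 \cdots \treevar_k]$, so that the induction is well-founded and the hypothesis continues to cover $\tilde{\treevar}$. The second of these is already handled inside the proof of \cref{lem:redundtrees} (which treats the root and non-root cases for $v$ separately), so the corollary follows with only minor bookkeeping.
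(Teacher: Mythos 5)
Your proof is correct and takes essentially the same route as the paper, which presents the corollary as an immediate consequence of \cref{lem:redundtrees}: your strong induction on $\treeorder{\treevar}$ simply makes explicit the repeated suppression of lone non-leaf children (each step strictly decreasing the vertex count) until a semi-lone-child-avoiding tree is reached, with the key observation that failing to be semi-lone-child-avoiding is, by the paper's own definition, exactly the applicability condition of the lemma.
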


\section{Global error estimates}\label{sec:stability_convergence}
Here we show how the LTEs accumulate in semilinear problems to yield a global error of order $\psl$. The error bounds hold uniformly with respect to stiffness. In the spirit of \cite[Section~2.3]{hundsdorfer2003numerical} and \cite{ostermann1992runge}, when an additional property of the Runge--Kutta method holds, the global error admits an extra order of convergence, i.e., \emph{superconvergence} of order $\psl + 1$. 
The superconvergence result hinges on a telescoping series based on the next lemma which describes the evolution of two neighboring Runge--Kutta solutions. The lemma is an extension of ``C-stability'' \cite[Definition~2.13]{dekker1984stability} without the use of norms and is proven in \cref{app:C-stability-lemma}.

\begin{lemma} \label{lemma:C-stability}
    Suppose that \cref{assump:ODE} holds, and that an AS- and ASI-stable Runge--Kutta method is applied to \cref{eq:ODE}.  Then there exists $\overline{h} > 0$ (depending on $L$ and method coefficients but not on $J$) such that any two numerical solutions of \cref{eq:RK} to \cref{eq:ODE} satisfy
    \begin{equation} \label{eq:C-stability}
        y_{n+1} - \widetilde{y}_{n+1} = (R(Z) + \dt \Lambda_n) (y_n - \widetilde{y}_n),
        \quad
        \forall h \in [0, \overline{h}).
    \end{equation}
    The norm of matrix $\Lambda_n$ can be bounded in terms of only $L$ and the method coefficients.
\end{lemma}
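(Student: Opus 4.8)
The plan is to subtract the two copies of the compact system \cref{eq:RK_kron} satisfied by $(y_n, Y_n)$ and $(\widetilde{y}_n, \widetilde{Y}_n)$ and to track how the stage difference is driven by the step difference. Writing $e_n := y_n - \widetilde{y}_n$ and $\mathcal{E}_n := Y_n - \widetilde{Y}_n$, the stage and step equations become
\begin{align*}
\mathcal{E}_n &= \one \otimes e_n + (A \otimes Z)\mathcal{E}_n + \dt (A \otimes I)\big(\tp{g}(Y_n) - \tp{g}(\widetilde{Y}_n)\big),\\
e_{n+1} &= e_n + (b^T \otimes Z)\mathcal{E}_n + \dt (b^T \otimes I)\big(\tp{g}(Y_n) - \tp{g}(\widetilde{Y}_n)\big).
\end{align*}
The first move is to linearize the nonlinear difference \emph{exactly} via the integral mean-value identity $\tp{g}(Y_n) - \tp{g}(\widetilde{Y}_n) = G_n \mathcal{E}_n$, where $G_n := \int_0^1 \tp{g}^{\,(1)}\big(\widetilde{Y}_n + s\,\mathcal{E}_n\big)\,\mathrm{d}s$ is block diagonal (one $g'$-block per stage). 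The Lipschitz bound \cref{eq:ODE_assumptions:nonlinear} forces $\norm{G_n} \leq L$ uniformly, and this is the only place the nonlinearity enters.

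Next I would solve the stage equation for $\mathcal{E}_n$ in terms of $e_n$. Collecting the $\mathcal{E}_n$ terms and applying $(I - A\otimes Z)^{-1}$ gives
\[
\big(I - \dt\, (I - A\otimes Z)^{-1}(A \otimes I)G_n\big)\mathcal{E}_n = (I - A\otimes Z)^{-1}(\one \otimes e_n).
\]
By \cref{lem:AS_ASI_stability_bounds} the factor $(I - A\otimes Z)^{-1}$ is bounded uniformly for $\mu(Z)\leq 0$ by a constant depending only on the method, so with $\norm{G_n}\leq L$ the perturbation has norm $\leq \dt\,C$ with $C = C(L, A)$. Choosing $\overline{h}$ with $\overline{h}\,C < 1$ makes the bracketed operator invertible by a Neumann series, with inverse $S_n$ satisfying $\norm{S_n} \leq (1 - \dt C)^{-1}$ uniformly in $Z$; crucially $\overline{h}$ depends only on $L$ and the method coefficients, not on $J$. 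Thus $\mathcal{E}_n = S_n (I - A\otimes Z)^{-1}(\one \otimes e_n)$ with $S_n = I + \dt\, (I-A\otimes Z)^{-1}(A\otimes I)G_nS_n$.

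Substituting into the step equation and splitting $S_n = I + (S_n - I)$, the leading piece collapses to $(R(Z) - I)e_n$ via the rational-function identity $(b^T\otimes Z)(I - A\otimes Z)^{-1}(\one\otimes I) = R(Z) - I$, which I would verify from the definition $R(z) = 1 + zb^T(I-zA)^{-1}\one$ by expanding $(I-A\otimes Z)^{-1} = \sum_{i\geq 0}A^i\otimes Z^i$ for small $\norm{Z}$ (yielding $\sum_{i}(b^TA^i\one)Z^{i+1}$) and extending by rationality. Hence $e_n + (b^T\otimes Z)(I-A\otimes Z)^{-1}(\one\otimes I)e_n = R(Z)e_n$, producing the required $R(Z)$ term, while everything remaining is proportional to $\dt$ and collects into
\[
\Lambda_n := (b^T\otimes Z)(I-A\otimes Z)^{-1}(A\otimes I)G_nS_n(I-A\otimes Z)^{-1}(\one\otimes I) + (b^T\otimes I)G_nS_n(I-A\otimes Z)^{-1}(\one\otimes I),
\]
giving exactly $e_{n+1} = (R(Z) + \dt\,\Lambda_n)e_n$, i.e.\ \cref{eq:C-stability}. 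To bound $\norm{\Lambda_n}$ uniformly in stiffness I would invoke \cref{lem:AS_ASI_stability_bounds} a second time: both $(I - A\otimes Z)^{-1}$ and $(b^T\otimes Z)(I-A\otimes Z)^{-1}$ are uniformly bounded for $\mu(Z)\leq 0$, while $\norm{A\otimes I}$, $\norm{b^T\otimes I}$, $\norm{G_n}\leq L$, and $\norm{S_n}$ are bounded by constants depending only on $L$ and the method coefficients.

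The delicate point, and the main obstacle, is keeping the (potentially huge) matrix $Z$ confined to the two combinations that \cref{lem:AS_ASI_stability_bounds} controls---$(I-A\otimes Z)^{-1}$ and $(b^T\otimes Z)(I-A\otimes Z)^{-1}$---so that neither the invertibility of the stage operator (hence $\overline{h}$) nor the bound on $\Lambda_n$ inherits any dependence on the stiffness. Identifying the linear leading term as precisely $R(Z)$, rather than merely some $\order{1}$ bounded operator, is what makes the downstream telescoping (superconvergence) argument work, so that algebraic identity is the part I would write out most carefully.
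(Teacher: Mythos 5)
Your proposal is correct and follows essentially the same route as the paper's proof in \cref{app:C-stability-lemma}: subtract the two compact systems, linearize the nonlinear difference exactly with the integral mean--value Jacobian (your $G_n$ is the paper's $\overline{G}_n$), solve for the stage difference via a Neumann series whose smallness condition uses the uniform bound from \cref{lem:AS_ASI_stability_bounds} and hence fixes an $\overline{h}$ independent of $J$, and identify the leading term as $R(Z)$. Your $\Lambda_n$ is algebraically identical to the paper's after the commutation identity $(b^T\otimes Z)(I-A\otimes Z)^{-1}(A\otimes I)+(b^T\otimes I)=(b^T\otimes I)(I-A\otimes Z)^{-1}$; the only cosmetic difference is that the paper also takes $\overline{h}\le\widetilde{h}$ explicitly to guarantee the stages exist.
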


The next theorem establishes the global error. We use the following condition on the Runge--Kutta stability function,
\begin{equation}\label{Eq:Rcondition}
    \lim_{z \rightarrow \infty} R(z) \neq 1\,, \qquad \textrm{and} \qquad z^{-1}(1-R(z)) \; \textrm{ has no zeros in } \; \C^{-} ,    
\end{equation}
to show that when constant time steps are used, an additional order is obtained. 

\begin{theorem}[Global Runge--Kutta error]\label{thm:convergence}
    Suppose a Runge--Kutta method is A-, AS-, and ASI-stable, has classical order $p$, and semilinear order $\psl \geq 1$. Under \cref{assump:ODE} (with $r \geq p, \psl$) there exists $\overline{\dt}, D > 0$ depending on $L$, $M$, $\tf-\ts$, and the Runge--Kutta coefficients, but not on $J$, such that
    \begin{equation} \label{eq:global_error}
        \begin{split}
            \norm{\dy} &\leq D \dt^{q},
            \qquad
            \text{with} \quad
            \dt \in [0, \overline{\dt}), \quad
            n \dt \leq \tf-\ts, \quad y_0 = y(\ts), \\
            q &= \begin{cases}
                \psl + 1, & \text{if $p = \psl + 1$ and $R(z)$ satisfies \cref{Eq:Rcondition}} \\
                \psl, & \text{otherwise}\;.
            \end{cases}
        \end{split}
    \end{equation}
\end{theorem}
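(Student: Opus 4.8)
The plan is to prove Theorem~\ref{thm:convergence} by combining the local truncation error estimate from \cref{thm:MainLTE} with the C-stability property of \cref{lemma:C-stability} via a telescoping/Lady Windermere's fan argument. First I would set up the standard error-propagation framework: denote by $\{y_n\}$ the numerical solution started from the exact initial data and introduce, for each index $m$, an auxiliary numerical trajectory $\{y_n^{[m]}\}$ that agrees with the exact solution at $t_m$ (i.e.\ $y_m^{[m]} = y(t_m)$) and is then propagated by the Runge--Kutta map. The global error at the final step decomposes into a telescoping sum of differences between consecutive auxiliary trajectories, where each difference is generated by a single local truncation error injected at one step and subsequently transported by the one-step map. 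Each local injection has size $\order{\dt^{\psl+1}}$ by \cref{Eq:LTE_ErrorEst}, and there are $\order{1/\dt}$ steps, which accounts for the baseline order $\psl$.

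The transport of each injected error is controlled by \cref{lemma:C-stability}: the difference of two neighboring solutions evolves according to the amplification operator $R(Z) + \dt\Lambda_n$. The key point is that $\norm{R(Z)} \leq 1$ by A-stability together with \cref{Thm:Nevanlinna} (the matrix von Neumann argument), while $\norm{\Lambda_n}$ is bounded uniformly in $J$. Hence a product of $k$ consecutive amplification operators has norm at most $(1 + \dt C)^k \leq e^{C(\tf-\ts)}$, a Gronwall-type bound that is uniform in stiffness. Summing the $\order{1/\dt}$ transported local errors, each of size $\order{\dt^{\psl+1}}$, yields $\norm{\Delta y_n} \leq D\dt^{\psl}$, establishing the generic case $q = \psl$.

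For the superconvergence case $q = \psl+1$, the plan is to extract one additional power of $\dt$ by a second telescoping that exploits the algebraic structure of the leading LTE term rather than merely its norm. When $p = \psl+1$, the order-$(\psl+1)$ part of the LTE that is \emph{not} already annihilated by the semilinear order conditions must come from the bushy-tree contributions of \cref{Eq:ErrWSOPsi}, which have the explicit form $\left(\text{rational function of }Z\right) y^{(\psl+1)}(t_n)\,\dt^{\psl+1}$. The idea, following \cite[Section~2.3]{hundsdorfer2003numerical} and \cite{ostermann1992runge}, is to write this leading term as a difference $S(Z)\,v(t_{n+1}) - S(Z)\,v(t_n)$ of a smooth grid function across one step (up to higher-order remainders), so that after transport and summation the contributions telescope and collapse to boundary terms of size $\order{\dt^{\psl+1}}$ rather than accumulating. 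Condition \cref{Eq:Rcondition} is precisely what guarantees the relevant operator $(I - R(Z))^{-1}$-type quantity needed to solve for $S(Z)$ is bounded uniformly in $Z$: the requirement $\lim_{z\to\infty} R(z)\neq 1$ and that $z^{-1}(1-R(z))$ has no zeros in $\C^-$ ensures the discrete resolvent is well-conditioned, again invoking \cref{Thm:Nevanlinna} to pass from the scalar bound to the matrix bound.

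The main obstacle I anticipate is the superconvergence step, specifically constructing the telescoping decomposition of the leading LTE term while controlling all auxiliary remainders uniformly in $Z$. One must verify that the grid function $v$ is sufficiently smooth (this is where $r \geq p$ in \cref{assump:ODE} is consumed), that the operator inversions licensed by \cref{Eq:Rcondition} are genuinely stiffness-uniform via Nevanlinna's theorem, and that the non-bushy-tree LTE terms are already $\order{\dt^{\psl+2}}$ or lower and therefore do not interfere. The norm-only argument of the generic case is comparatively routine; the delicate bookkeeping is in isolating exactly which piece of the order-$(\psl+1)$ error survives and showing it admits the discrete-derivative (telescoping) representation.
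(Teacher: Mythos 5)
Your base-order argument ($q = \psl$) matches the paper's: the error recursion from \cref{lemma:C-stability}, the uniform bound on $R(Z)$ from A-stability via \cref{Thm:Nevanlinna}, and a Gronwall/geometric-series accumulation of the $\order{\dt^{\psl+1}}$ local errors. The superconvergence plan also has the right shape (a Hundsdorfer--Verwer-style shifted error that telescopes the leading LTE term), but it contains two concrete errors. First, you assert that the surviving order-$(\psl+1)$ part of the LTE comes only from the bushy-tree contributions \cref{Eq:ErrWSOPsi} and that the non-bushy-tree terms are already $\order{\dt^{\psl+2}}$. That is false: the semilinear order conditions are assumed only up to order $\psl$, so \emph{every} tree $\treevar \in T_{\psl+1}$ contributes a genuine $\order{\dt^{\psl+1}}$ term $\zeta_{\treevar}\,\oc[\treevar][Z][t_{n-1}]\,\dt^{\psl+1}$; for example, with $\psl = 2$ the tall tree $[[\treeroot]]$ produces the third line of \cref{eq:LTE_order_3}, which does not vanish. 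The paper's shift $\xi_{n-1}$ therefore sums over all of $T_{\psl+1}$, and your telescoping function $v$ must account for these non-bushy differentials (which depend on $g'$ evaluated along the trajectory, not just on $y^{(\psl+1)}$).

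Second, and more seriously, you attribute the uniform boundedness of the $(I - R(Z))^{-1}$-type operator entirely to condition \cref{Eq:Rcondition}. But $R(0) = 1$ for any consistent method, so $(1 - R(z))^{-1}$ always has a pole at $z = 0$; \cref{Eq:Rcondition} only excludes additional zeros of $1 - R(z)$ elsewhere in $\C^{-}$ and at $z = \infty$. The pole at the origin is removable only because, at $z = 0$, $\oc[\treevar][0][t]$ reduces to a classical order-$(\psl+1)$ residual (\cref{rem:classical_special_case}), which vanishes precisely because the method has classical order $p = \psl + 1$. This is the actual role of the hypothesis $p = \psl + 1$, which your proposal lists as part of the case condition but never uses; without this cancellation the operator $S(Z)$ you want to construct is unbounded for $Z$ with eigenvalues near zero, and the stiffness-uniform bound fails. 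The paper then still needs a Lipschitz-in-$t$ estimate $\norm{F_{\treevar}(Z, t_n) - F_{\treevar}(Z, t_{n-1})} \leq D_5 \dt$ (obtained by mapping $\C^{-}$ conformally to the closed unit disk and applying Nevanlinna's theorem to the time derivative as well) in order to make the telescoped differences $\xi_n - \xi_{n-1}$ of size $\order{\dt^{\psl+2}}$; your sketch gestures at this, but the pole-cancellation step is the missing idea that makes the whole construction legitimate.
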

\begin{proof}
    Here we use $D_i$ to denote a positive constant depending only on $L$, $M$, $\tf-\ts$, and the method coefficients.
    Take $\dt < \overline{\dt}$ (where $\overline{h}$ is defined in \cref{lemma:C-stability}) and let $y_n$ denote the Runge--Kutta solution initialized to $y_0 = y(\ts)$.  For each $n$, let $\widetilde{y}_{n}$ be one step of the Runge--Kutta method initialized with the exact solution $y(\timevar_{n-1})$. Hence, $y(\timevar_n) - \widetilde{y}_n$ is the LTE \revision{at $t_n$}.
    Using \cref{lemma:C-stability}, the Runge--Kutta error is
    \begin{equation}
    \label{Eq:ErrRecursion}
        \dy
        = (\widetilde{y}_n - y_n) + (y(\timevar_n) - \widetilde{y}_n)
        = (R(Z) + \dt \Lambda_{n-1}) \dy[n-1] + (y(\timevar_n) - \widetilde{y}_n).
    \end{equation}
    The ``standard'' global error bound then follows from using A-stability and \cite[Theorem~4]{hairer1982stability} to bound the first term in \cref{Eq:ErrRecursion}, and \cref{thm:LTE} to bound the second, whence
    \begin{equation*}
        \norm{\dy} \leq (1 + h D_1) \norm{\dy[n-1]} + D_2 \dt^{\psl+1}.
    \end{equation*}
    Thus, $q = \psl$ follows from a geometric series bound. 
        
    The case of $q = \psl + 1$ relies on a telescoping series for the error. First, separate out the leading order of the LTE in \cref{Eq:ErrRecursion} as 
    \begin{equation*}
        y(\timevar_n) - \widetilde{y}_n = \big(I - R(Z)\big) \xi_{n-1} + \eta_{n-1},
    \end{equation*}
    where    
    \begin{equation*}
        \xi_{n-1} = \big(I - R(Z)\big)^{-1} \sum_{\treevar \in T_{\psl + 1}} \, \zeta_{\treevar} \, \oc[\treevar][Z][t_{n-1}] \, \dt^{\psl + 1} ,
    \end{equation*}
    and $\norm{\eta_{n-1}} \leq D_3 \dt^{\psl + 2}$  where $D_3$ is independent of $Z$.  In the spirit of \cite[Lemma~2.3]{hundsdorfer2003numerical}, the shifted error $\epsilon_n = \dy - \xi_n$ satisfies the forced linear recurrence
    \begin{equation}\label{Eq:ShiftedLinRec}
        \epsilon_n = \big(R(Z) + \dt \Lambda_{n-1}\big) \epsilon_{n-1} + (\xi_{n-1} - \xi_n) + \eta_{n-1} + \dt \Lambda_{n-1} \xi_{n-1},
        \qquad
        \epsilon_0 = -\xi_0.
    \end{equation}
    We now claim that there exists $D_4 > 0$ such that $\forall \, n \in \N$ for which $n \dt \leq \tf-\ts$, and $Z$ satisfying $\mu(Z) \leq 0$, the following estimates hold:
    \begin{equation}\label{Eq:EstimateXi}
        \norm{\xi_n} \leq D_4 \dt^{\psl + 1} \qquad \textrm{and} \qquad 
        \norm{\xi_n - \xi_{n-1}} \leq D_4 \dt^{\psl + 2}.
    \end{equation}
    To establish \cref{Eq:EstimateXi}, introduce the function $F_{\tau}(Z, t) \coloneqq (I - R(Z))^{-1} \oc[\treevar][Z][t]$,
    where $\psi$ is defined in \cref{thm:LTE_trees}. When $|\tau| = p \leq r$, the function $F_{\tau}(z, t)$ is a rational function in the variable $z \in \C$, whose coefficients are continuously differentiable functions of $t$ (i.e., the coefficients depend on the derivatives of $g$, $y$ up to orders $|\tau|-1$ and $|\tau|$, respectively). 

    We first show that, for fixed $\tau$, the function $F_{\tau}(z, t)$ of $z$ and $t$, as well as its time derivative, are bounded in $\C^{-} \times [\ts, \tf]$. For $|\tau| = p$, the function $F_{\tau}(z, t)$ has no poles in $\C^{-} \times [\ts, \tf]$ since $\ldots$
    \begin{itemize}
        \item by \cref{Eq:Rcondition}, the rational function $(1 - R(z))^{-1}$ has a single simple pole at $z=0$ on $\C^{-}$;
        \item $\oc[\treevar][z][t]$ is a rational function of $z$, and bounded on $z \in \C^{-}$ (by AS-, ASI- stability);
        \item for $|\tau| = p$, the function $\oc[\treevar][0][t]$ is a classical $p = \psl+1$ order condition (see \cref{rem:classical_special_case}), which by assumption is zero.
    \end{itemize}
    We can conclude that the pole at $z = 0$ in $F_{\tau}(z, t)$ is removable. 
    
    Next, introduce the conformal mapping $\rho(w) \coloneqq (w-1)/(w+1)$ which maps $\rho : \mathcal{D} \rightarrow \C^{-}$ where $\mathcal{D} \coloneqq \{ z \in \C : |z| \leq 1\}$ is the closed unit disk. Then for $|\tau| = p$, both $F_{\tau}(\rho(w), t)$ and $\partial F_{\tau}(\rho(w), t)/\partial t$ are rational functions of $w$, with coefficients that are continuous functions of $t$, and poles outside $\mathcal{D}$ (by the quotient rule, differentiation does not change the location of a pole). Hence, both $|F|$ and $|\partial F/\partial t|$ are continuous functions on the compact set $\mathcal{D} \times [\ts, \tf]$, and thus bounded. 

    The scalar version of \cref{Thm:Nevanlinna} (cf.~\cite[Theorem~4]{hairer1982stability}) implies that for $|\tau| = p$, $Z$ satisfying $\mu(Z) \leq 0$ and $n \dt \leq \tf-\ts$, the following estimates hold:
    \begin{align}
        \label{eq:leading_error_function1}        
        \norm{F_{\tau}(Z, t_n)} &\leq \sup_{(z, t) \in \C^{-} \times [\ts,\tf]} |F_{\tau}(z, t)| < \infty,
        \\
        \nonumber
        \norm{F_{\tau}(Z, t_n) - F_{\tau}(Z, t_{n-1})} &\leq
        \sup_{(z, t) \in \C^{-} \times [\ts,\tf]} \abs{F_{\tau}(z, t_{n}) - F_{\tau}(z, t_{n-1})} \\
        &\leq D_5 \abs{t_{n} - t_{n-1}} = D_5 \dt .
        \label{eq:leading_error_function2}
    \end{align}
    Here $D_5$ is the Lipschitz constant provided by the bound on the time derivative of $F$. Applying \cref{eq:leading_error_function1} and \cref{eq:leading_error_function2} with the triangle inequality to $\xi_n$ yields \cref{Eq:EstimateXi}. 

    Repeating the standard geometric series bound on \cref{Eq:ShiftedLinRec}, together with \cref{Eq:EstimateXi}, we can conclude that $\norm{\epsilon_n} \leq D_6 \dt^{\psl + 1}$. Since $\dy = \epsilon_n + \xi_n$, it follows that \cref{eq:global_error} holds.
\end{proof}

\section{Conclusions} \label{sec:conclusion}
Stiff convergence analysis is a key complement to the classical error analysis in the asymptotic limit of the time step being small relative to the two-sided Lipschitz constant of the ODE's right-hand side. This work has established such stiff convergence theory for Runge--Kutta methods applied to semilinear ODEs in which the linear term can be arbitrarily stiff. The theory provides rigorous error estimates outside of the classical asymptotic regime, when implicit methods are of interest. Specifically, the established B-convergence results hold uniformly with respect to stiffness.


The approach employed herein to derive the semilinear order conditions adapts a unique recursion originally proposed by Albrecht \cite{albrecht1987new,albrecht1996runge}.
Up to order three in the LTE, the semilinear order conditions coincide with conditions already known for linear problems \cite{biswas2022algebraic}. This structural insight in particular rationalizes why existing methods with high weak stage order manage to mitigate order reduction on problems outside what previous theory could predict. Starting at fourth order terms in the LTE, new order conditions arise, i.e., semilinear order strictly goes beyond weak stage order. All of the semilinear order conditions have been established to be in one-to-one correspondence with rooted trees.

The theoretical framework established herein will be leveraged in a companion paper devoted to the derivation of various novel DIRK methods of up to order 5, as well as the demonstration of the successful mitigation of order reduction in relevant semilinear test problems, for which existing comparable methods fail to exhibit their full order of convergence.



\backmatter

\bmhead{Acknowledgements}
The authors would like to thank David I. Ketcheson for many helpful discussions.
DS would like to gratefully acknowledge the Vietnam Institute for Advanced Study in Mathematics (VIASM) for hosting a visit during June 2025.

\section*{Declarations}
This work was performed under the auspices of the U.S.\ Department of Energy by Lawrence Livermore National Laboratory under Contract DE-AC52-07NA27344.
LLNL-JRNL-2005958.
Roberts was supported by the Fernbach Fellowship through the LLNL-LDRD Program under Project No. 23-ERD-048.
This material is based upon work supported by the National Science Foundation under Grant No.~DMS--2309728 (Seibold) and DMS--2309727 (Shirokoff). Any opinions, findings, and conclusions or recommendations expressed in this material are those of the authors and do not necessarily reflect the views of the National Science Foundation.

\begin{appendices}

\section{Proof of Theorem~\ref{thm:LTE_trees}} \label{app:abstractrec}
We establish \cref{thm:LTE_trees} by viewing \cref{eq:LTE_coeffs} as an abstract recursion relation. Let $\mathbb{V}$ be a finite dimensional vector space over $\R$. Given a sequence of vectors $\ac[i]$ $\revision{(i = 1, 2, \dots)}$ in $\mathbb{V}$,  families of numbers $\scalvar_{\ell, k} \in \R$ and symmetric $k$-linear maps $\bimap_{\ell, k} : \revision{\mathbb{V}^{k}} \rightarrow \mathbb{V}$ indexed by $\ell \geq 0$ and $k \geq 1$, define the recursion relation:
\begin{align}
    \nonumber
    \vy[1] &\coloneqq 0 \\
    \label{eq:abstractrec}
    \vy[i+1] &\coloneqq \ac[i] + \sum_{\ell = 0}^{i-1} \ \sum_{k=1}^{i-\ell} \
    \sum_{m_1 + \ldots + m_k = i - \ell}
    \scalvar_{\ell, k} \bimap_{\ell, k}(\vy[m_1], \ldots, \vy[m_k])    \qquad \revision{(i = 1, 2, \dots)}.
\end{align}
Note that both the linear recursion for $\dY[0][i]$ in \cref{eq:LTE_coeffs} can be recast in the form \cref{eq:abstractrec} with suitably chosen variables, maps and a vector space $\mathbb{V}$.

\begin{lemma}\label{lem:abstractrec}
    The recurrence \cref{eq:abstractrec} has a solution of the form 
    \begin{equation}\label{eq:abstractsol}
        \vy[i]  = \sum_{\treevar \in T_{i}} \comcoef(\treevar) \, \varphi(\treevar),
    \end{equation}
    where $\varphi : T \rightarrow \mathbb{V}$ is given by 
    \begin{equation}\label{Eq:PhiFormula}
        \varphi(\treevar) =  \left\{\begin{array}{lll}
            0 & \quad \treevar = \treeroot \\
            \ac[\ell] & \quad \treevar = [\treeroot^{\ell}], & (\ell \geq 1) \\ 
            \bimap_{\ell, k}(\varphi(\treevar_1), \ldots, \varphi(\treevar_k) ) & \quad \treevar = [\treeroot^{\ell} \, \treevar_1 \dots \treevar_k], \, & (\ell \geq 0, k \geq 1)
        \end{array} \right. .
    \end{equation}
    Here $\comcoef : T \rightarrow  \R$ is a combinatorial factor defined to be
    \begin{equation}\label{eq:SigmaFormula}
        \comcoef([\treeroot^{\ell}]) = 1, \qquad 
        \comcoef([\treeroot^{\ell} \treevar_1 \ldots \treevar_k]) = \frac{\scalvar_{\ell, k} \, k!}{\mu_1! \, \mu_2! \cdots \mu_\sigma!}\comcoef(\treevar_1) \comcoef(\treevar_2) \cdots \comcoef(\treevar_k),
        \quad \ell \geq 0, k \geq 1
    \end{equation}
    where $\mu_1, \ldots, \mu_\sigma$ are the multiplicities of the $\sigma$ distinct trees in the set $\{\treevar_1, \ldots, \treevar_k\}$. 
\end{lemma}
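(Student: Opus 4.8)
The plan is to establish the closed form \cref{eq:abstractsol} by strong induction on the index $i$. The guiding principle is that the standardized decomposition $\treevar = [\treeroot^{\ell}\,\treevar_1 \cdots \treevar_k]$ organizes the terms generated by the recursion \cref{eq:abstractrec} according to rooted trees: the scalar $\ac[\ell]$ plays the role of the bushy tree, and the symmetric map $\bimap_{\ell,k}$ builds a composite tree from its subtrees. Since the $\bimap_{\ell,k}$ are assumed symmetric, the value $\varphi([\treeroot^\ell \treevar_1 \cdots \treevar_k]) = \bimap_{\ell,k}(\varphi(\treevar_1), \ldots, \varphi(\treevar_k))$ from \cref{Eq:PhiFormula} is independent of the order in which the subtrees are listed, so that $\varphi$ is well-defined on unordered trees; this symmetry is exactly what will allow many ordered terms in the recursion to collapse onto a single tree.

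For the base case $i = 1$, observe that $T_1 = \{\treeroot\}$ and $\varphi(\treeroot) = 0$, so the right side of \cref{eq:abstractsol} is $\comcoef(\treeroot)\,\varphi(\treeroot) = 0 = \vy[1]$. For the inductive step, I would assume \cref{eq:abstractsol} for every index up to $i$ and substitute $\vy[m_j] = \sum_{\treevar_j \in T_{m_j}} \comcoef(\treevar_j)\,\varphi(\treevar_j)$ into each slot of $\bimap_{\ell,k}$ in \cref{eq:abstractrec}, using $k$-linearity to extract the sums. Because $\vy[1] = 0 = \varphi(\treeroot)$, every term with some $m_j = 1$ vanishes, which is precisely the mechanism that enforces the requirement $\treevar_j \neq \treeroot$ in the standardized form. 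Applying \cref{Eq:PhiFormula} turns each surviving summand into $\scalvar_{\ell,k}\,\comcoef(\treevar_1)\cdots\comcoef(\treevar_k)\,\varphi([\treeroot^\ell \treevar_1 \cdots \treevar_k])$, indexed by $\ell$, $k$, and ordered tuples $(\treevar_1, \ldots, \treevar_k)$ of nontrivial trees with $\sum_j \abs{\treevar_j} = i-\ell$. A vertex count gives $\abs{[\treeroot^\ell \treevar_1 \cdots \treevar_k]} = 1 + \ell + (i-\ell) = i+1$, so every composite tree lies in $T_{i+1}$, while the separate term $\ac[i] = \varphi([\treeroot^i])$ accounts for the unique bushy tree of $T_{i+1}$.

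The crux of the argument --- and the step I expect to require the most care --- is regrouping the sum over ordered tuples into a sum over distinct standardized trees. A fixed tree $\treevar = [\treeroot^\ell \treevar_1 \cdots \treevar_k]$ is determined by $\ell$ together with the multiset $\{\treevar_1, \ldots, \treevar_k\}$, and it arises from exactly $k!/(\mu_1! \cdots \mu_\sigma!)$ distinct ordered tuples, where $\mu_1, \ldots, \mu_\sigma$ are the multiplicities of the distinct subtrees. By symmetry of $\bimap_{\ell,k}$ each such tuple yields the same $\varphi(\treevar)$, and the product $\comcoef(\treevar_1)\cdots\comcoef(\treevar_k)$ is invariant under reordering; hence the total contribution of $\treevar$ is $\frac{\scalvar_{\ell,k}\,k!}{\mu_1!\cdots\mu_\sigma!}\,\comcoef(\treevar_1)\cdots\comcoef(\treevar_k)\,\varphi(\treevar)$, which equals $\comcoef(\treevar)\,\varphi(\treevar)$ by the definition \cref{eq:SigmaFormula}. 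I would verify that this regrouping is a genuine partition --- each nontrivial tree of $T_{i+1}$ corresponds to one choice of $(\ell,k)$ and one multiset, with no double counting --- after which summing over all trees of $T_{i+1}$ (including the bushy one with $\comcoef([\treeroot^i]) = 1$) reproduces \cref{eq:abstractsol} for $\vy[i+1]$ and closes the induction.
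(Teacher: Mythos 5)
Your proposal is correct and follows essentially the same route as the paper's proof: induction on $i$, substitution of the ansatz into the recursion, use of multilinearity and the vanishing of $\varphi(\treeroot)$ to discard tuples containing $\treeroot$, and the symmetry of $\bimap_{\ell,k}$ to collapse the $k!/(\mu_1!\cdots\mu_\sigma!)$ ordered tuples onto each standardized tree, with the bushy tree $[\treeroot^i]$ supplied separately by $\ac[i]$. No gaps to report.
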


\begin{remark}
    If $\ell = 0$ and $\scalvar_{\ell, k} = 1$, then $\comcoef$ counts the permutations of $\treevar_1, \treevar_2, \ldots, \treevar_k$ (cf.~\cite[Chapter~III.1.3]{hairerlubichwanner2006}).
\end{remark}

\begin{proof}
The proof of \cref{lem:abstractrec} follows via induction on $i$. For $i = 1$, we have $\vy[1] = \varphi(\treeroot) = 0$.  Next, assume that the formulas \cref{eq:abstractsol,Eq:PhiFormula,eq:SigmaFormula} hold for $1, \ldots, i$. We then show the result holds for $\vy[i+1]$.
%

Substituting the ansatz \cref{eq:abstractsol} into \cref{eq:abstractrec}, we have
\begin{align}\nonumber 
    \vy[i+1] &= \ac[i] + \sum_{\ell=0}^{i-1} \sum_{k=1}^{i-\ell} \
    \sum_{\substack{ m_1 + \ldots + \\ m_k = i - \ell } }
    \scalvar_{\ell, k}  \bimap_{\ell, k}\mleft( \sum_{\treevar_1 \in T_{m_1}} \comcoef(\treevar_1) \varphi(\treevar_1)\,, \ldots, \sum_{\treevar_k \in T_{m_k}}\comcoef(\treevar_k) \varphi(\treevar_k) \mright) \nonumber \\ \label{eq:sum1}
    &= \ac[i] + \sum_{\ell =0}^{i-1} \sum_{\substack{ \treevar \in T_{i - \ell+1} \\ 
    \treevar = [\treevar_1 \treevar_2 \cdots \treevar_k]} } \frac{ \scalvar_{\ell, k}  \, k!}{\mu_1! \ldots \mu_{\sigma}!} \ \comcoef(\treevar_1) \cdots \comcoef(\treevar_k) \, \bimap_{\ell, k}\big( \varphi(\treevar_1), \ldots, \varphi(\treevar_k) \big),
\end{align}
where the $\mu_1, \ldots, \mu_\sigma$ are defined as in \cref{eq:SigmaFormula}.  The first line of \cref{eq:sum1} sums over all trees $\treevar_1, \treevar_2, \ldots, \treevar_k$ (where $k$ can be arbitrary) so long as the vertices sum to $i - \ell$.  Since $\bimap_{\ell, k}$ is a symmetric multilinear function, the second line follows by summing over each tree $[\treevar_1 \treevar_2 \ldots \treevar_k]$ once.  The combinatorial factor $k!/(\mu_1!\cdots \mu_{\sigma}!)$ counts the permutations of $\treevar_1, \ldots, \treevar_k$ that yield the same tree $[\treevar_1 \treevar_2 \ldots \treevar_k]$.

Next, note that $\varphi(\treeroot) = 0$, so that any term in the summation in \cref{eq:sum1} with $\treevar_j = \treeroot$ is zero.  Therefore, one may assume, without loss of generality, that $\treevar = [\treevar_1 \treevar_2 \ldots \treevar_k]$ in the summation in \cref{eq:sum1} is in standard form with no power of $\treeroot$.  The summation over $\ell$ can then be represented by adjoining $\treeroot^{\ell}$ to the root of $[\treevar_1 \treevar_2 \ldots \treevar_k]$, which yields
\begin{equation}\label{eq:sum2}
    \vy[i+1] = \underbrace{\ac[i]}_{\varphi([\treeroot^i])} + \sum_{\substack{ \treevar \in T_{i + 1} \\ 
    \treevar = [\treeroot^{\ell} \treevar_1 \treevar_2 \cdots \treevar_k]} } \comcoef(\treevar) \, \underbrace{\bimap_{\ell, k}\big( \varphi(\treevar_1), \ldots, \varphi(\treevar_k) \big)}_{\varphi(\treevar)} .
\end{equation}
The last term in \cref{eq:sum2} sums over all trees in $T_{i+1}$ except  $[\treeroot^i]$ since no $\treevar_j = \treeroot$ for \revision{$j = 1, \dots, k$}.  The term $\ac[i]$ in \cref{eq:sum2} then adds the missing lone tree to the sum (since $\comcoef([\treeroot^{i}]) = 1$), and the result \cref{Eq:PhiFormula} holds for $i+1$.
\end{proof}

\begin{proof}[Proof of \cref{thm:LTE_trees}] The recursion relation for $\dY[0][i]$ in \cref{eq:LTE_coeffs} can be recast in the form \cref{eq:abstractrec} by taking $\mathbb{V} = \R^{s} \otimes \R^{\nvar}$, $\vy[i] = \dY[0][i]$, $\varphi(\treevar) = \oc*[\treevar][Z][t]$, and
    \begin{align*}
        \scalvar_{\ell, k} &= (-1)^{k+1}\frac{1}{k! \ell!},
        \qquad
        \ac[i] = (I - A \otimes Z)^{-1} \left( \so{i+1} \otimes y^{(i+1)}(\timevar) \right), \\
        \bimap_{\ell, k} &= 
            (I - A \otimes Z)^{-1} \big((A C^{\ell}) \otimes I\big) \, \G(\timevar).
    \end{align*}
    \Cref{thm:LTE_trees} for $\dY[0][i]$ then follows by applying \cref{lem:abstractrec}.
    When we substitute the proven value of $\dY[0][i]$ into \cref{eq:LTE_coeffs}, $\dy[1][i]$ can be written in the form of \cref{eq:sum1} with $\varphi(\treevar) = \oc*[\treevar][Z][t]$ and
    \begin{align*}
        \ac[i] &= \big(b^T \otimes Z\big) \big(I - A \otimes Z\big)^{-1} \left( \so{i+1} \otimes y^{(i+1)}(\timevar) \right) +\qo{i+1} y^{(i+1)}(\timevar), \\
        \bimap_{\ell, k} &= 
        \big(b^T \otimes I\big) \big(I - A \otimes Z\big)^{-1} \big(C^{\ell} \otimes I\big) \, \G(\timevar).
    \end{align*}
    \Cref{eq:LTE_tree_series:y} follows from the same simplification to sums over trees from the proof of \cref{lem:abstractrec}.
\end{proof}

\section{Proof of Lemma~\ref{lemma:C-stability}}\label{app:C-stability-lemma}
Take $\dt < \widetilde{\dt}$ (as defined in \cref{thm:exist_unique}) so that the stage equations for $Y_n$ are solvable.
Introduce $\Delta \widetilde{y}_n = y_n - \widetilde{y}_n$ and $\Delta \widetilde{Y}_n = Y_n - \widetilde{Y}_n$.
Following the same analysis as \cref{eq:LTE}, the difference in solutions yields (for $\dt < \widetilde{\dt}$) 
\begin{subequations}
    \begin{align}
        \label{eq:diff:stages}
        \Delta \widetilde{Y}_n &= (I - A \otimes Z)^{-1}(\one \otimes \Delta \widetilde{y}_n + \dt (A \otimes I)(\tp{g}(Y_n) - \tp{g}(\widetilde{Y}_n))), \\ \label{eq:diff:step}
        \Delta \widetilde{y}_{n+1} &= R(Z) \, \Delta \widetilde{y}_n + \dt (b^T \otimes I) (I - A \otimes Z)^{-1}(\tp{g}(Y_n) - \tp{g}(\widetilde{Y}_n)).
    \end{align}
\end{subequations}
Here AS- and ASI-stability ensure the existence and boundedness of the matrix inverses and $R(Z)$.
Focusing on the difference in $g$ values, we have
\begin{equation} \label{eq:MVT}
    \tp{g}(Y_n) - \tp{g}(\widetilde{Y}_n)
    = \begin{bmatrix}
        g(Y_{n,1}) - g(\widetilde{Y}_{n,1}) \\
        \vdots \\
        g(Y_{n,s}) - g(\widetilde{Y}_{n,s})
    \end{bmatrix}
    = \begin{bmatrix}
        \overline{G}_{n,1} \Delta \widetilde{Y}_{n,1} \\
        \vdots \\
        \overline{G}_{n,s} \Delta \widetilde{Y}_{n,s}
    \end{bmatrix}
    = \overline{G}_n \Delta \widetilde{Y}_{n},
\end{equation}
where $\overline{G}_{n,i}$ is the mean value of the Jacobian matrix:
\begin{equation*}
    \overline{G}_{n,i} \coloneqq \int_0^1 g'\mleft(\theta Y_{n,i} + (1 - \theta) \widetilde{Y}_{n,i} \mright) \, {\rm d}\theta,
    \qquad
    \overline{G}_n = \blkdiag(\overline{G}_{n,1},\dots,\overline{G}_{n,s}).
\end{equation*}
%
Substituting the right-hand side of \cref{eq:MVT} into \cref{eq:diff:stages} and solving for $\Delta \widetilde{Y}_n$ yields
\begin{equation*}
    \Delta \widetilde{Y}_n        
    = \left(I - h (I - A \otimes Z)^{-1} (A \otimes I) \overline{G}_n \right)^{-1} (I - A \otimes Z)^{-1} ( \one \otimes \Delta \widetilde{y}_n ), 
\end{equation*}
where the matrix $I - h (I - A \otimes Z)^{-1} (A \otimes I) \overline{G}_n$ is guaranteed to be invertible if
\begin{equation}\label{eq:invert_cond}
    h \norm{(I - A \otimes Z)^{-1} (A \otimes I) \overline{G}_n} < 1.
\end{equation}
A simple sufficient condition for \cref{eq:invert_cond} to hold is as follows. Let $B < \infty$ be an upper bound (implied by \cref{lem:AS_ASI_stability_bounds}) satisfying $\norm{(I - A \otimes Z)^{-1}} \leq B$.
Next note that \cref{assump:ODE} implies $\norm{\overline{G}_{n,i}} \leq L$  ($i = 1, \ldots, s$), which implies that $\norm{\overline{G}_n} \leq L$.
Thus,
\begin{equation*}
    \norm{(I - A \otimes Z)^{-1} (A \otimes I) \overline{G}_n}
    \leq L B \norm{A}. 
\end{equation*}
Setting $\overline{\dt} = \min \{ \widetilde{\dt}, \frac{1}{2} (L B \norm{A})^{-1} \}$ implies that \cref{eq:invert_cond} holds for $\dt < \overline{\dt}$. The extra factor of $\frac{1}{2}$ is chosen somewhat arbitrarily so that $\dt L B \norm{A}$ is bounded strictly away from $1$.
Lastly, substituting \cref{eq:diff:stages} and \cref{eq:MVT} into \cref{eq:diff:step} yields
\begin{equation*}
    \Delta \widetilde{y}_{n+1} = R(Z) \Delta \widetilde{y}_n + \dt \Lambda_n \, \Delta \widetilde{y}_n,
\end{equation*}
where $\Lambda_n$ is the matrix defined as
%
\begin{equation*}
    \Lambda_n \coloneqq (b^T \otimes I) (I - A \otimes Z)^{-1} \overline{G}_n
    (I - \dt (I - A \otimes Z)^{-1} (A \otimes I) \overline{G}_n)^{-1} (I - A \otimes Z)^{-1} (\one \otimes I).
\end{equation*}
Using that fact, statement \cref{eq:C-stability} follows because
\begin{equation*}
\begin{split}
    \norm{\Lambda_n}
    &\leq \norm{b} \cdot \norm{(I - A \otimes Z)^{-1}}^2 \cdot \norm{\overline{G}_n} \cdot \norm{(I - \dt (I - A \otimes Z)^{-1} (A \otimes I) \overline{G}_n)^{-1}} \cdot \norm{\one} \\
    &\leq \frac{\norm{b} B^2 L \sqrt{s}}{1 - \dt \norm{(I - A \otimes Z)^{-1} (A \otimes I) \overline{G}_n}} \\
    &\leq 2 \norm{b} B^2 L \sqrt{s}\;.
\end{split}
\end{equation*}
%

\end{appendices}

\bibliography{bib}


\begin{thebibliography}{39}
\ifx \bisbn   \undefined \def \bisbn  #1{ISBN #1}\fi
\ifx \binits  \undefined \def \binits#1{#1}\fi
\ifx \bauthor  \undefined \def \bauthor#1{#1}\fi
\ifx \batitle  \undefined \def \batitle#1{#1}\fi
\ifx \bjtitle  \undefined \def \bjtitle#1{#1}\fi
\ifx \bvolume  \undefined \def \bvolume#1{\textbf{#1}}\fi
\ifx \byear  \undefined \def \byear#1{#1}\fi
\ifx \bissue  \undefined \def \bissue#1{#1}\fi
\ifx \bfpage  \undefined \def \bfpage#1{#1}\fi
\ifx \blpage  \undefined \def \blpage #1{#1}\fi
\ifx \burl  \undefined \def \burl#1{\textsf{#1}}\fi
\ifx \doiurl  \undefined \def \doiurl#1{\url{https://doi.org/#1}}\fi
\ifx \betal  \undefined \def \betal{\textit{et al.}}\fi
\ifx \binstitute  \undefined \def \binstitute#1{#1}\fi
\ifx \binstitutionaled  \undefined \def \binstitutionaled#1{#1}\fi
\ifx \bctitle  \undefined \def \bctitle#1{#1}\fi
\ifx \beditor  \undefined \def \beditor#1{#1}\fi
\ifx \bpublisher  \undefined \def \bpublisher#1{#1}\fi
\ifx \bbtitle  \undefined \def \bbtitle#1{#1}\fi
\ifx \bedition  \undefined \def \bedition#1{#1}\fi
\ifx \bseriesno  \undefined \def \bseriesno#1{#1}\fi
\ifx \blocation  \undefined \def \blocation#1{#1}\fi
\ifx \bsertitle  \undefined \def \bsertitle#1{#1}\fi
\ifx \bsnm \undefined \def \bsnm#1{#1}\fi
\ifx \bsuffix \undefined \def \bsuffix#1{#1}\fi
\ifx \bparticle \undefined \def \bparticle#1{#1}\fi
\ifx \barticle \undefined \def \barticle#1{#1}\fi
\bibcommenthead
\ifx \bconfdate \undefined \def \bconfdate #1{#1}\fi
\ifx \botherref \undefined \def \botherref #1{#1}\fi
\ifx \url \undefined \def \url#1{\textsf{#1}}\fi
\ifx \bchapter \undefined \def \bchapter#1{#1}\fi
\ifx \bbook \undefined \def \bbook#1{#1}\fi
\ifx \bcomment \undefined \def \bcomment#1{#1}\fi
\ifx \oauthor \undefined \def \oauthor#1{#1}\fi
\ifx \citeauthoryear \undefined \def \citeauthoryear#1{#1}\fi
\ifx \endbibitem  \undefined \def \endbibitem {}\fi
\ifx \bconflocation  \undefined \def \bconflocation#1{#1}\fi
\ifx \arxivurl  \undefined \def \arxivurl#1{\textsf{#1}}\fi
\csname PreBibitemsHook\endcsname

\bibitem[\protect\citeauthoryear{Prothero and
  Robinson}{1974}]{prothero1974stability}
\begin{barticle}
\bauthor{\bsnm{Prothero}, \binits{A.}},
\bauthor{\bsnm{Robinson}, \binits{A.}}:
\batitle{On the stability and accuracy of one-step methods for solving stiff
  systems of ordinary differential equations}.
\bjtitle{Mathematics of Computation}
\bvolume{28}(\bissue{125}),
\bfpage{145}--\blpage{162}
(\byear{1974})
\end{barticle}
\endbibitem

\bibitem[\protect\citeauthoryear{Frank et~al.}{1981}]{frank1981concept}
\begin{barticle}
\bauthor{\bsnm{Frank}, \binits{R.}},
\bauthor{\bsnm{Schneid}, \binits{J.}},
\bauthor{\bsnm{Ueberhuber}, \binits{C.W.}}:
\batitle{The concept of {B}-convergence}.
\bjtitle{SIAM Journal on Numerical Analysis}
\bvolume{18}(\bissue{5}),
\bfpage{753}--\blpage{780}
(\byear{1981})
\doiurl{10.1137/0718051}
\end{barticle}
\endbibitem

\bibitem[\protect\citeauthoryear{Dekker and Verwer}{1984}]{dekker1984stability}
\begin{botherref}
\oauthor{\bsnm{Dekker}, \binits{K.}},
\oauthor{\bsnm{Verwer}, \binits{J.G.}}:
Stability of {Runge--Kutta} methods for stiff nonlinear differential equations.
CWI monographs
\textbf{2}
(1984)
\end{botherref}
\endbibitem

\bibitem[\protect\citeauthoryear{Scholz}{1989}]{scholz1989order}
\begin{barticle}
\bauthor{\bsnm{Scholz}, \binits{S.}}:
\batitle{Order barriers for the {B}-convergence of {ROW} methods}.
\bjtitle{Computing}
\bvolume{41}(\bissue{3}),
\bfpage{219}--\blpage{235}
(\byear{1989})
\doiurl{10.1007/BF02259094}
\end{barticle}
\endbibitem

\bibitem[\protect\citeauthoryear{Frank et~al.}{1985}]{frank1985order}
\begin{barticle}
\bauthor{\bsnm{Frank}, \binits{R.}},
\bauthor{\bsnm{Schneid}, \binits{J.}},
\bauthor{\bsnm{Ueberhuber}, \binits{C.W.}}:
\batitle{Order results for implicit {Runge--Kutta} methods applied to stiff
  systems}.
\bjtitle{SIAM Journal on Numerical Analysis}
\bvolume{22}(\bissue{3}),
\bfpage{515}--\blpage{534}
(\byear{1985})
\doiurl{10.1137/0722031}
\end{barticle}
\endbibitem

\bibitem[\protect\citeauthoryear{Burrage and
  Hundsdorfer}{1987}]{burrage1987order}
\begin{barticle}
\bauthor{\bsnm{Burrage}, \binits{K.}},
\bauthor{\bsnm{Hundsdorfer}, \binits{W.H.}}:
\batitle{The order of {B}-convergence of algebraically stable {Runge--Kutta}
  methods}.
\bjtitle{BIT Numerical Mathematics}
\bvolume{27}(\bissue{1}),
\bfpage{62}--\blpage{71}
(\byear{1987})
\doiurl{10.1007/BF01937355}
\end{barticle}
\endbibitem

\bibitem[\protect\citeauthoryear{Hairer and Wanner}{1996}]{hairer1996solving}
\begin{bbook}
\bauthor{\bsnm{Hairer}, \binits{E.}},
\bauthor{\bsnm{Wanner}, \binits{G.}}:
\bbtitle{Solving Ordinary Differential Equations II: Stiff and
  Differential-Algebraic Problems},
\bedition{2}nd edn.
\bsertitle{Springer Series in Computational Mathematics},
vol. \bseriesno{14}.
\bpublisher{Springer},
\blocation{Berlin, Heidelberg}
(\byear{1996})
\end{bbook}
\endbibitem

\bibitem[\protect\citeauthoryear{Burrage et~al.}{1986}]{burrage1986study}
\begin{barticle}
\bauthor{\bsnm{Burrage}, \binits{K.}},
\bauthor{\bsnm{Hundsdorfer}, \binits{W.H.}},
\bauthor{\bsnm{Verwer}, \binits{J.G.}}:
\batitle{A study of {B}-convergence of {Runge--Kutta} methods}.
\bjtitle{Computing}
\bvolume{36}(\bissue{1}),
\bfpage{17}--\blpage{34}
(\byear{1986})
\doiurl{10.1007/BF02238189}
\end{barticle}
\endbibitem

\bibitem[\protect\citeauthoryear{Auzinger et~al.}{1992}]{auzinger1992extension}
\begin{barticle}
\bauthor{\bsnm{Auzinger}, \binits{W.}},
\bauthor{\bsnm{Frank}, \binits{R.}},
\bauthor{\bsnm{Kirlinger}, \binits{G.}}:
\batitle{An extension of {B}-convergence for {Runge--Kutta} methods}.
\bjtitle{Applied Numerical Mathematics}
\bvolume{9}(\bissue{2}),
\bfpage{91}--\blpage{109}
(\byear{1992})
\doiurl{10.1016/0168-9274(92)90008-2}
\end{barticle}
\endbibitem

\bibitem[\protect\citeauthoryear{Calvo et~al.}{2000}]{calvo2000runge}
\begin{barticle}
\bauthor{\bsnm{Calvo}, \binits{M.}},
\bauthor{\bsnm{Gonz{\'a}lez-Pinto}, \binits{S.}},
\bauthor{\bsnm{Montijano}, \binits{J.I.}}:
\batitle{{R}unge--{K}utta methods for the numerical solution of stiff
  semilinear systems}.
\bjtitle{BIT Numerical Mathematics}
\bvolume{40}(\bissue{4}),
\bfpage{611}--\blpage{639}
(\byear{2000})
\doiurl{10.1023/A:1022332200092}
\end{barticle}
\endbibitem

\bibitem[\protect\citeauthoryear{Strehmel and Weiner}{1987}]{strehmel1987b}
\begin{barticle}
\bauthor{\bsnm{Strehmel}, \binits{K.}},
\bauthor{\bsnm{Weiner}, \binits{R.}}:
\batitle{{B}-convergence results for linearly implicit one step methods}.
\bjtitle{BIT Numerical Mathematics}
\bvolume{27}(\bissue{2}),
\bfpage{264}--\blpage{281}
(\byear{1987})
\doiurl{10.1007/BF01934189}
\end{barticle}
\endbibitem

\bibitem[\protect\citeauthoryear{Skvortsov}{2003}]{skvortsov2003accuracy}
\begin{barticle}
\bauthor{\bsnm{Skvortsov}, \binits{L.M.}}:
\batitle{Accuracy of {Runge--Kutta} methods applied to stiff problems}.
\bjtitle{Computational Mathematics and Mathematical Physics}
\bvolume{43}(\bissue{9}),
\bfpage{1320}--\blpage{1330}
(\byear{2003})
\end{barticle}
\endbibitem

\bibitem[\protect\citeauthoryear{Skvortsov}{2010}]{skvortsov2010model}
\begin{barticle}
\bauthor{\bsnm{Skvortsov}, \binits{L.M.}}:
\batitle{Model equations for accuracy investigation of {Runge--Kutta} methods}.
\bjtitle{Mathematical Models and Computer Simulations}
\bvolume{2}(\bissue{6}),
\bfpage{800}--\blpage{811}
(\byear{2010})
\doiurl{10.1134/S2070048210060165}
\end{barticle}
\endbibitem

\bibitem[\protect\citeauthoryear{Cai et~al.}{2025}]{CaiWanKareem2025}
\begin{barticle}
\bauthor{\bsnm{Cai}, \binits{Y.}},
\bauthor{\bsnm{Wan}, \binits{J.}},
\bauthor{\bsnm{Kareem}, \binits{A.}}:
\batitle{On convergence of implicit {R}unge-{K}utta methods for the
  incompressible {N}avier-{S}tokes equations with unsteady inflow}.
\bjtitle{Journal of Computational Physics}
\bvolume{523},
\bfpage{113627}
(\byear{2025})
\doiurl{10.1016/j.jcp.2024.113627}
\end{barticle}
\endbibitem

\bibitem[\protect\citeauthoryear{Hochbruck and
  Ostermann}{2005}]{hochbruck2005explicit}
\begin{barticle}
\bauthor{\bsnm{Hochbruck}, \binits{M.}},
\bauthor{\bsnm{Ostermann}, \binits{A.}}:
\batitle{Explicit exponential {Runge--Kutta} methods for semilinear parabolic
  problems}.
\bjtitle{SIAM Journal on Numerical Analysis}
\bvolume{43}(\bissue{3}),
\bfpage{1069}--\blpage{1090}
(\byear{2005})
\doiurl{10.1137/040611434}
\end{barticle}
\endbibitem

\bibitem[\protect\citeauthoryear{Luan and
  Ostermann}{2013}]{luan2013exponential}
\begin{barticle}
\bauthor{\bsnm{Luan}, \binits{V.T.}},
\bauthor{\bsnm{Ostermann}, \binits{A.}}:
\batitle{Exponential {B}-series: The stiff case}.
\bjtitle{SIAM Journal on Numerical Analysis}
\bvolume{51}(\bissue{6}),
\bfpage{3431}--\blpage{3445}
(\byear{2013})
\doiurl{10.1137/130920204}
\end{barticle}
\endbibitem

\bibitem[\protect\citeauthoryear{Hochbruck
  et~al.}{2020}]{hochbruck2020convergence}
\begin{barticle}
\bauthor{\bsnm{Hochbruck}, \binits{M.}},
\bauthor{\bsnm{Leibold}, \binits{J.}},
\bauthor{\bsnm{Ostermann}, \binits{A.}}:
\batitle{On the convergence of {L}awson methods for semilinear stiff problems}.
\bjtitle{Numerische Mathematik}
\bvolume{145}(\bissue{3}),
\bfpage{553}--\blpage{580}
(\byear{2020})
\doiurl{10.1007/s00211-020-01120-4}
\end{barticle}
\endbibitem

\bibitem[\protect\citeauthoryear{Hansen and Ostermann}{2016}]{hansen2016high}
\begin{barticle}
\bauthor{\bsnm{Hansen}, \binits{E.}},
\bauthor{\bsnm{Ostermann}, \binits{A.}}:
\batitle{High-order splitting schemes for semilinear evolution equations}.
\bjtitle{BIT Numerical Mathematics}
\bvolume{56}(\bissue{4}),
\bfpage{1303}--\blpage{1316}
(\byear{2016})
\doiurl{10.1007/s10543-016-0604-2}
\end{barticle}
\endbibitem

\bibitem[\protect\citeauthoryear{Einkemmer and
  Ostermann}{2015}]{einkemmer2015overcoming}
\begin{barticle}
\bauthor{\bsnm{Einkemmer}, \binits{L.}},
\bauthor{\bsnm{Ostermann}, \binits{A.}}:
\batitle{Overcoming order reduction in diffusion-reaction splitting. part 1:
  {D}irichlet boundary conditions}.
\bjtitle{SIAM Journal on Scientific Computing}
\bvolume{37}(\bissue{3}),
\bfpage{1577}--\blpage{1592}
(\byear{2015})
\doiurl{10.1137/140994204}
\end{barticle}
\endbibitem

\bibitem[\protect\citeauthoryear{Einkemmer and
  Ostermann}{2016}]{einkemmer2016overcoming}
\begin{barticle}
\bauthor{\bsnm{Einkemmer}, \binits{L.}},
\bauthor{\bsnm{Ostermann}, \binits{A.}}:
\batitle{Overcoming order reduction in diffusion-reaction splitting. part 2:
  Oblique boundary conditions}.
\bjtitle{SIAM Journal on Scientific Computing}
\bvolume{38}(\bissue{6}),
\bfpage{3741}--\blpage{3757}
(\byear{2016})
\doiurl{10.1137/16M1056250}
\end{barticle}
\endbibitem

\bibitem[\protect\citeauthoryear{Lubich and
  Ostermann}{1995}]{lubich1995linearly}
\begin{barticle}
\bauthor{\bsnm{Lubich}, \binits{C.}},
\bauthor{\bsnm{Ostermann}, \binits{A.}}:
\batitle{{Linearly implicit time discretization of non-linear parabolic
  equations}}.
\bjtitle{IMA Journal of Numerical Analysis}
\bvolume{15}(\bissue{4}),
\bfpage{555}--\blpage{583}
(\byear{1995})
\doiurl{10.1093/imanum/15.4.555}
\end{barticle}
\endbibitem

\bibitem[\protect\citeauthoryear{Ketcheson et~al.}{2020}]{ketcheson2020dirk}
\begin{bchapter}
\bauthor{\bsnm{Ketcheson}, \binits{D.I.}},
\bauthor{\bsnm{Seibold}, \binits{B.}},
\bauthor{\bsnm{Shirokoff}, \binits{D.}},
\bauthor{\bsnm{Zhou}, \binits{D.}}:
\bctitle{{DIRK} schemes with high weak stage order}.
In: \beditor{\bsnm{Sherwin}, \binits{S.J.}},
\beditor{\bsnm{Moxey}, \binits{D.}},
\beditor{\bsnm{Peir{\'o}}, \binits{J.}},
\beditor{\bsnm{Vincent}, \binits{P.E.}},
\beditor{\bsnm{Schwab}, \binits{C.}} (eds.)
\bbtitle{Spectral and High Order Methods for Partial Differential Equations
  ICOSAHOM 2018},
pp. \bfpage{453}--\blpage{463}.
\bpublisher{Springer},
\blocation{Cham}
(\byear{2020})
\end{bchapter}
\endbibitem

\bibitem[\protect\citeauthoryear{Albrecht}{1987}]{albrecht1987new}
\begin{barticle}
\bauthor{\bsnm{Albrecht}, \binits{P.}}:
\batitle{A new theoretical approach to {R}unge--{K}utta methods}.
\bjtitle{SIAM Journal on Numerical Analysis}
\bvolume{24}(\bissue{2}),
\bfpage{391}--\blpage{406}
(\byear{1987})
\doiurl{10.1137/0724030}
\end{barticle}
\endbibitem

\bibitem[\protect\citeauthoryear{Albrecht}{1996}]{albrecht1996runge}
\begin{barticle}
\bauthor{\bsnm{Albrecht}, \binits{P.}}:
\batitle{The {R}unge--{K}utta theory in a nutshell}.
\bjtitle{SIAM Journal on Numerical Analysis}
\bvolume{33}(\bissue{5}),
\bfpage{1712}--\blpage{1735}
(\byear{1996})
\doiurl{10.1137/S0036142994260872}
\end{barticle}
\endbibitem

\bibitem[\protect\citeauthoryear{Boscarino and
  Russo}{2009}]{BoscarinoRusso2009}
\begin{barticle}
\bauthor{\bsnm{Boscarino}, \binits{S.}},
\bauthor{\bsnm{Russo}, \binits{G.}}:
\batitle{On a class of uniformly accurate {IMEX} {R}unge-{K}utta schemes and
  applications to hyperbolic systems with relaxation}.
\bjtitle{SIAM Journal on Scientific Computing}
\bvolume{31}(\bissue{3}),
\bfpage{1926}--\blpage{1945}
(\byear{2009})
\end{barticle}
\endbibitem

\bibitem[\protect\citeauthoryear{Hu and Shu}{2025}]{HuShu2025}
\begin{barticle}
\bauthor{\bsnm{Hu}, \binits{J.}},
\bauthor{\bsnm{Shu}, \binits{R.}}:
\batitle{Uniform accuracy of implicit-explicit {R}unge-{K}utta ({IMEX-RK})
  schemes for hyperbolic systems with relaxation}.
\bjtitle{Math. Comp.}
\bvolume{94},
\bfpage{209}--\blpage{240}
(\byear{2025})
\end{barticle}
\endbibitem

\bibitem[\protect\citeauthoryear{Luan and Ostermann}{2014}]{LuanOstermann2014}
\begin{barticle}
\bauthor{\bsnm{Luan}, \binits{V.T.}},
\bauthor{\bsnm{Ostermann}, \binits{A.}}:
\batitle{Exponential {R}osenbrock methods of order five — construction,
  analysis and numerical comparisons}.
\bjtitle{Journal of Computational and Applied Mathematics}
\bvolume{255},
\bfpage{417}--\blpage{431}
(\byear{2014})
\end{barticle}
\endbibitem

\bibitem[\protect\citeauthoryear{Nevanlinna}{1985}]{nevanlinna1985matrix}
\begin{barticle}
\bauthor{\bsnm{Nevanlinna}, \binits{O.}}:
\batitle{Matrix valued versions of a result of von {N}eumann with an
  application to time discretization}.
\bjtitle{Journal of Computational and Applied Mathematics}
\bvolume{12-13},
\bfpage{475}--\blpage{489}
(\byear{1985})
\doiurl{10.1016/0377-0427(85)90041-X}
\end{barticle}
\endbibitem

\bibitem[\protect\citeauthoryear{Taylor}{2011}]{TaylorPDE1}
\begin{bbook}
\bauthor{\bsnm{Taylor}, \binits{M.E.}}:
\bbtitle{Partial Differential Equations I},
\bedition{2nd} edn.
\bpublisher{Springer},
\blocation{Cham}
(\byear{2011})
\end{bbook}
\endbibitem

\bibitem[\protect\citeauthoryear{Rang}{2014}]{rang2014analysis}
\begin{barticle}
\bauthor{\bsnm{Rang}, \binits{J.}}:
\batitle{An analysis of the {Prothero--Robinson} example for constructing new
  {DIRK} and {ROW} methods}.
\bjtitle{Journal of Computational and Applied Mathematics}
\bvolume{262},
\bfpage{105}--\blpage{114}
(\byear{2014})
\doiurl{10.1016/j.cam.2013.09.062}
\end{barticle}
\endbibitem

\bibitem[\protect\citeauthoryear{Ostermann and
  Roche}{1992}]{ostermann1992runge}
\begin{barticle}
\bauthor{\bsnm{Ostermann}, \binits{A.}},
\bauthor{\bsnm{Roche}, \binits{M.}}:
\batitle{{Runge--Kutta} methods for partial differential equations and
  fractional orders of convergence}.
\bjtitle{Mathematics of Computation}
\bvolume{59}(\bissue{200}),
\bfpage{403}--\blpage{420}
(\byear{1992})
\doiurl{10.1090/s0025-5718-1992-1142285-6}
\end{barticle}
\endbibitem

\bibitem[\protect\citeauthoryear{Rosales et~al.}{2024}]{rosales2024spatial}
\begin{barticle}
\bauthor{\bsnm{Rosales}, \binits{R.R.}},
\bauthor{\bsnm{Seibold}, \binits{B.}},
\bauthor{\bsnm{Shirokoff}, \binits{D.}},
\bauthor{\bsnm{Zhou}, \binits{D.}}:
\batitle{Spatial manifestations of order reduction in {R}unge-{K}utta methods
  for initial boundary value problems}.
\bjtitle{Commun. Math. Sci.}
\bvolume{22}(\bissue{3}),
\bfpage{613}--\blpage{653}
(\byear{2024})
\doiurl{10.4310/CMS.2024.v22.n3.a2}
\end{barticle}
\endbibitem

\bibitem[\protect\citeauthoryear{Biswas et~al.}{2024}]{biswas2022algebraic}
\begin{barticle}
\bauthor{\bsnm{Biswas}, \binits{A.}},
\bauthor{\bsnm{Ketcheson}, \binits{D.I.}},
\bauthor{\bsnm{Seibold}, \binits{B.}},
\bauthor{\bsnm{Shirokoff}, \binits{D.}}:
\batitle{Algebraic structure of the weak stage order conditions for
  {R}unge--{K}utta methods}.
\bjtitle{SIAM Journal on Numerical Analysis}
\bvolume{62}(\bissue{1}),
\bfpage{48}--\blpage{72}
(\byear{2024})
\doiurl{10.1137/22M1483943}
\end{barticle}
\endbibitem

\bibitem[\protect\citeauthoryear{Biswas et~al.}{2023}]{biswas2023design}
\begin{barticle}
\bauthor{\bsnm{Biswas}, \binits{A.}},
\bauthor{\bsnm{Ketcheson}, \binits{D.I.}},
\bauthor{\bsnm{Seibold}, \binits{B.}},
\bauthor{\bsnm{Shirokoff}, \binits{D.}}:
\batitle{Design of {DIRK} schemes with high weak stage order}.
\bjtitle{Communications in Applied Mathematics and Computational Science}
\bvolume{18}(\bissue{1}),
\bfpage{1}--\blpage{28}
(\byear{2023})
\end{barticle}
\endbibitem

\bibitem[\protect\citeauthoryear{Biswas et~al.}{2025}]{biswas2023explicit}
\begin{barticle}
\bauthor{\bsnm{Biswas}, \binits{A.}},
\bauthor{\bsnm{Ketcheson}, \binits{D.I.}},
\bauthor{\bsnm{Roberts}, \binits{S.}},
\bauthor{\bsnm{Seibold}, \binits{B.}},
\bauthor{\bsnm{Shirokoff}, \binits{D.}}:
\batitle{Explicit {Runge--Kutta} methods that alleviate order reduction}.
\bjtitle{SIAM Journal on Numerical Analysis}
\bvolume{63}(\bissue{4}),
\bfpage{1398}--\blpage{1426}
(\byear{2025})
\doiurl{10.1137/23M1606812}
\end{barticle}
\endbibitem

\bibitem[\protect\citeauthoryear{{OEIS Foundation Inc.}}{2025}]{oeis}
\begin{botherref}
\oauthor{\bsnm{{OEIS Foundation Inc.}}}:
The {O}n-{L}ine {E}ncyclopedia of {I}nteger {S}equences.
Published electronically at \url{http://oeis.org}
(2025)
\end{botherref}
\endbibitem

\bibitem[\protect\citeauthoryear{Hundsdorfer and
  Verwer}{2003}]{hundsdorfer2003numerical}
\begin{bbook}
\bauthor{\bsnm{Hundsdorfer}, \binits{W.H.}},
\bauthor{\bsnm{Verwer}, \binits{J.G.}}:
\bbtitle{Numerical Solution of Time-dependent Advection-diffusion-reaction
  Equations}
vol. \bseriesno{33}.
\bpublisher{Springer},
\blocation{Berlin/Heidelberg}
(\byear{2003})
\end{bbook}
\endbibitem

\bibitem[\protect\citeauthoryear{Hairer et~al.}{1982}]{hairer1982stability}
\begin{barticle}
\bauthor{\bsnm{Hairer}, \binits{E.}},
\bauthor{\bsnm{Bader}, \binits{G.}},
\bauthor{\bsnm{Lubich}, \binits{C.}}:
\batitle{On the stability of semi-implicit methods for ordinary differential
  equations}.
\bjtitle{BIT Numerical Mathematics}
\bvolume{22}(\bissue{2}),
\bfpage{211}--\blpage{232}
(\byear{1982})
\doiurl{10.1007/BF01944478}
\end{barticle}
\endbibitem

\bibitem[\protect\citeauthoryear{Hairer et~al.}{2006}]{hairerlubichwanner2006}
\begin{bbook}
\bauthor{\bsnm{Hairer}, \binits{E.}},
\bauthor{\bsnm{Lubich}, \binits{C.}},
\bauthor{\bsnm{Wanner}, \binits{G.}}:
\bbtitle{Geometric Numerical Integration},
\bedition{2}nd edn.
\bsertitle{Springer Series in Computational Mathematics},
vol. \bseriesno{31}.
\bpublisher{Springer},
\blocation{Dordrecht}
(\byear{2006}).
\doiurl{10.1007/3-540-30666-8}
\end{bbook}
\endbibitem

\end{thebibliography}

\end{document}